\documentclass{amsart}
\usepackage{amsfonts,amssymb,amsmath,amsthm}
\usepackage{url}
\usepackage{enumerate}
\urlstyle{sf}
\newtheorem{thrm}{Theorem}[section]
\newtheorem{lem}[thrm]{Lemma}
\newtheorem{prop}[thrm]{Proposition}
\newtheorem{cor}[thrm]{Corollary}
\theoremstyle{definition}

\newtheorem{remark}[thrm]{Remark}
\newcommand{\Gr}{\operatorname{Gr}}

\newcommand{\rk}{\operatorname{rk}}

\newcommand{\gon}{\operatorname{gon}}
\newcommand{\Spec}{\operatorname{Spec}}
\newcommand{\Hom}{\operatorname{Hom}}
\newcommand{\im}{\operatorname{Im}}

\numberwithin{equation}{section}

\author{Ali Bajravani}
\address{Department of Mathematics, Faculty of Basic Sciences, Azarbaijan Shahid Madani University, Tabriz, I. R. Iran.\\
P. O. Box: 53751-71379.\\
}
\email{bajravani@azaruniv.ac.ir}
%\thanks{This work was done while the author was a visiting scholar at
%a northern university}

\keywords{Secant Loci; Tangent Cone; Very Ample Line Bundle.}
\subjclass{Primary 14H99; Secondary 14H51.}
%% NB There should be only one primary classification, and zero or
%more secondary classifications.
\begin{document}

\title[A note on the Tangent Cones of ... ]{A note on the Tangent Cones of the scheme of Secant Loci}

\begin{abstract}
The point of this short note concerns with two facts on the scheme of secant loci. The first one is an attempt
to describe the tangent cone of these schemes globally and the second one is a comparison on the dimension of the tangent spaces of various schemes of secant Loci.
\end{abstract}
\maketitle

\section{Introduction and Notations} \label{sect1}
Let $C$ be a smooth projective algebraic curve of genu $g$; $W^0_{g-1}(C)$ its theta divisor and $L\in W^0_{g-1}(C)$ be a multiple point of the theta divisor.
Based on a classical and nice result of Bernhard Riemann, the tangent cone of $W^0_{g-1}(C)$ at $L=\mathcal{O}(D)$ is, set theoretically, the union of the $n$-planes $\Lambda=\langle E \rangle$, where $E$ is the canonical image of a divisor $\acute{E}\in \mid  D \mid$ in the canonical space of $C$ and $n=\deg(D)-h^0(D)$. See \cite[Ch. 6]{ACGH}. G. Kempf generalized this result to the schemes $W^0_d$, when $1\leq d\leq g-1$, (see \cite{K}). Subsequently, Arbarello, Cornalba, Griffiths and Harris used the scheme of linear series, $G^r_d(C)$'s, to give a global description of the tangent cone of the Brill-Noether schemes, $W^r_d$, at their multiple points when $r$ and $d$ ranges in $1\leq 2r\leq d\leq g-1$.

The scheme of secant loci of globally generated line bundles on $C$, being as a generalization of the classical Brill-Noether varieties, was under focus of some authors beginning by M. Coppens in 1990's to recently by M. Aprodu and E. Sernesi. Marc Coppens, M. E. Huibregetse and T. Johnsen, studying the local behavior of these schemes, have given  descriptions of their tangent space and tangent cones at their various points, in terms of their local defining equations.

The first aim of this note is to describe the tangent cone of the scheme of secant loci', globally. In order to do so
the method of \cite{ACGH} in constructing linear series $G^r_d$, goes verbatim to construct analogous schemes
on the varieties of secant divisors. The resulting spaces enjoy a powerful universal property. Based on this property; these schemes, so called "the scheme of divisor series" would be used to obtain a global description for the tangent cones of the scheme of secant divisors.

W. Fulton and etal., established inequalities within the dimension of various Brill-Noether varieties in \cite{F-H-L}. The relations have been extended recently to the varieties of secant loci by M. Aprodu and E. Sernesi in \cite{A-S2}.
Inspired by their results, we report in Theorem \ref{comparision theorem 1} similar inequalities within
$ \dim V^{r}_{d}(\Gamma)$, $\dim  V^{r}_{d}(\Gamma(-x))$, $\dim T_D(V^{r}_d(\Gamma))$, $\dim T_{D+x}(V^{r}_{d+1}(\Gamma))$ and $  \dim T_{D} V^{r}_{d}(\Gamma(-x)),$
where $x$ is a general point of $C$. This is the second aim of this paper. As a corollary to this result, the smoothness of $ V^{r}_{d}(\Gamma)$, when  $ V^{r}_{d}(\Gamma)$ is of expected dimension, implies the same property for $ V^{r}_{d+1}(\Gamma)$ and $ V^{r}_{d}(\Gamma(-x))$.

Assume that $\Gamma$ is a line bundle on a smooth projective algebraic curve $C$ of genus $g$ with $h^0(\Gamma)=s+1$ and $d$ is a positive integer. For an integer $d\geq 2$, consider the diagram
$$\begin{array}{cccccc}
&C\times C_d &\overset{\pi_2}\longrightarrow & C_d\\
\pi_1\!\!\!\!\!\!\!\!\!\!\!\!\!\!\!&\downarrow & &  \\
&C & &  \\
\end{array}$$
 and define the secant bundle of degree $d$;
 $E_{\Gamma}:=(\pi_2)_*(\pi^*\Gamma\otimes \mathcal{O}_{\Delta})$, where $\Delta$ is the universal divisor of degree $d$. The morphism
 $$(\pi_2)_*(\pi^*\Gamma) \overset{\phi_{\Gamma}}\longrightarrow
  (\pi_2)_*(\pi_1^*\Gamma\otimes \mathcal{O}_{\Delta})$$
 is a map of vector bundles of ranks $s+1$ and $d$, respectively.
 For a positive integer $r$, $1\leq r\leq d-1$, the variety of secant loci of $\Gamma$ is the zero scheme of the map $\wedge^{d-r+1}\phi_{\Gamma}$, i.e.
\begin{align}
V^r_d(\Gamma)=Z(\wedge^{d-r+1}\phi_{\Gamma}).
\end{align}
The variety of secant loci of $\Gamma$ migh be described set theoretically as
 $$V^{r}_{d}(\Gamma):=\lbrace D\in C_{d} \mid h^0(\Gamma)-h^0(\Gamma(-D))\leq d-r \rbrace.$$
 See \cite{A-S}, $\cdots$, \cite{J} for more details on the scheme structure of $V_{d}^{r}(\Gamma)$ and some of its geometric properties.

\section{The structure of $\mathcal{V}^{s+1-d+r}_d(\Gamma)$:} \label{ns}
For a closed subscheme $Z\subset X$ defined as the $k$-th degeneracy locus of a morphism of vector bundles
$\gamma: \mathcal{F}\rightarrow \mathcal{G},$
its canonical desingularization, as it is defined in \cite[Page 83-84]{ACGH}, parametrizes couples $(x, W)$ in which $x\in X$ and
$W\in \Gr(n-k, \ker \gamma_x)$,
where $\rk \mathcal{F}=n$ and $\rk \mathcal{G}=m$. Denote
such a desingularization by $\tilde{X}_k(\gamma)$ and
set $\mathcal{V}^{s+1-d+r}_d(\Gamma):=\tilde{X}_{d-r}(\phi_{\Gamma})$.
 Geometrically, the scheme $\mathcal{V}^{s+1-d+r}_d(\Gamma)$ parametrizes couples $(D, \Lambda)$, with $D\in V^{r}_{d}(\Gamma)$ and $\langle D \rangle\subset\Lambda\subset \mathbb{P}(H^0(\Gamma))$ with $\dim \Lambda=d-r-1$. The elements of $\mathcal{V}^{s+1-d+r}_d(\Gamma)$, are called divisor series.

\subsubsection{Families of divisor series:}\label{Families of divisor series:}
A family of divisor series, $\delta^r_d(\Gamma)$, w.r.t. $\Gamma$ parametrized by $S$, is the datum of:\\
\label{condition1}(I) A family $\mathcal{D}$ of degree $d$ divisors on $C$, parametrized by $S$;\\
\label{condition2}(II) A rank $(s+1-d+r)$-vector bundle $\mathcal{T}$, which is a subvector bundle of
$ (\bar{\pi}_2)_*(\bar{\pi}_1^*\Gamma\otimes \mathcal{O}(\mathcal{D})^{\vee}),$
with the property that, for each $s\in S$, the homomorphism
$$\mathcal{T}\otimes k(s)\rightarrow
H^0(  (\bar{\pi}_2)^{-1}(s), [\bar{\pi}_1^*\Gamma\otimes \mathcal{O}(\mathcal{D})^{\vee}]\otimes \mathcal{O}_{(\bar{\pi}_2)^{-1}(s)})$$
  is injective, where $\bar{\pi}_1$ and $\bar{\pi}_2$ are the projections from $C\times S$
to $C$ and $S$, respectively.

Two families $(\mathcal{D}_1, \mathcal{T}_1)$ and $(\mathcal{D}_2, \mathcal{T}_2)$ of $\delta_d^r(\Gamma)$'s on $C$ parametrized by $S$ are said to be equivalent if
$\mathcal{D}_1 = \mathcal{D}_2,$
such that $\mathcal{T}_1$ can be identified via $\mathcal{T}_2$ under this equality.
\subsubsection{The universal family of divisor series:}\label{The universal family of divisor series}
Consider that $\mathcal{V}^{s+1-d+r}_d(\Gamma)$ is a subvariety of the Grassmann bundle
$G(s+1-d+r, (\pi_2)_*\pi_1^*\Gamma)$ over $C_d$. If
$$e:\mathcal{V}_d^{s+1-d+r}(\Gamma)\rightarrow C_d$$
is the restriction of the projection map from $G(s+1-d+r, (\pi_2)_*\pi_1^*\Gamma)$ to $\mathcal{V}^{s+1-d+r}_d(\Gamma)$, then the universal family of $\delta^r_d(\Gamma)$'s on $C$ parametrized by
$\mathcal{V}^{s+1-d+r}_d(\Gamma)$ is $(e^*(\Delta), \mathcal{G})$, where $\mathcal{G}$ is the restriction to $\mathcal{V}^{s+1-d+r}_d(\Gamma)$ of the universal sub-bundle on $G(s+1-d+r, (\pi_2)_*\pi_1^*\Gamma)$ and $\Delta$ is the universal divisor of degree $d$. We denote this family of divisors by $\mathcal{U}\mathcal{V}_d^{s+1-d+r}(\Gamma)$.
%%%%%%%%%%%%%%%%%%%%%%%%%%%%%%%
\begin{lem}\label{lem1}
Assume that
 $\mathcal{D}$ is a family of degree $d$ divisors on $C$, parametrized by $S$;
and $f:S\rightarrow C_d$ is the unique morphism such that $(f\times id_C)^*(\Delta)=\mathcal{D}$. Then
$$\ker f^*(\phi_{\Gamma})\cong \ker \lbrace(\bar{\pi}_2)_*((f\times id_C)^*(\pi_1^*(\Gamma)))\longrightarrow (\bar{\pi}_2)_*((f\times id_C)^*(\pi_1^*(\Gamma)\otimes \mathcal{O}_{\Delta}))\rbrace$$
\end{lem}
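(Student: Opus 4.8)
The plan is to reduce the statement to a base-change assertion for pushforwards. Observe that $\phi_\Gamma$ is by construction the morphism of vector bundles on $C_d$
$$(\pi_2)_*\pi_1^*\Gamma \longrightarrow (\pi_2)_*(\pi_1^*\Gamma\otimes\mathcal{O}_\Delta),$$
and that $f^*$ applied to this morphism yields a morphism of bundles on $S$. The right-hand side of the claimed isomorphism is the kernel of the morphism obtained by first pulling the two sheaves on $C\times C_d$ back along $f\times\mathrm{id}_C$, then pushing forward along $\bar\pi_2$. So the content of the lemma is exactly that $f^*$ commutes with the relevant pushforwards, i.e. that $f^*(\pi_2)_*\mathcal{F}\cong(\bar\pi_2)_*(f\times\mathrm{id}_C)^*\mathcal{F}$ for $\mathcal{F}=\pi_1^*\Gamma$ and for $\mathcal{F}=\pi_1^*\Gamma\otimes\mathcal{O}_\Delta$, compatibly with the map between them; taking kernels then finishes the argument since $f^*$ is exact (flat pullback) and a kernel of vector-bundle maps is computed fibrewise after the base field, but more robustly because $\ker$ commutes with flat base change here.

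The key steps, in order, are: (1) Set up the fibre square with $\bar\pi_2: C\times S\to S$, $\pi_2: C\times C_d\to C_d$, and the vertical maps $f\times\mathrm{id}_C$ and $f$; note this square is Cartesian. (2) Invoke cohomology and base change (in the form of \cite[Ch. IV]{ACGH}, or Hartshorne III.9.3 / EGA) for the flat morphism $\pi_2$: since $\pi_2$ is finite in the case of $\mathcal{O}_\Delta$-twisted sheaf and flat projective with the higher direct images locally free / vanishing for $\pi_1^*\Gamma$ over a curve fibre, the natural base-change map $f^*(\pi_2)_*\mathcal{F}\to(\bar\pi_2)_*(f\times\mathrm{id}_C)^*\mathcal{F}$ is an isomorphism; apply this to $\mathcal{F}_1=\pi_1^*\Gamma$ and $\mathcal{F}_2=\pi_1^*\Gamma\otimes\mathcal{O}_\Delta$. (3) Check that these two base-change isomorphisms are compatible with the restriction map $\mathcal{F}_1\to\mathcal{F}_2$, so that $f^*\phi_\Gamma$ is identified with the displayed morphism on $S$; this is just naturality of the base-change map in the sheaf argument. (4) Since $f^*$ is exact, $\ker f^*\phi_\Gamma = f^*\ker\phi_\Gamma$ is unnecessary — rather, apply $f^*$ to the left-exact sequence defining $\ker\phi_\Gamma$ is not quite what we want either; instead conclude directly: the kernel of $f^*\phi_\Gamma$ equals the kernel of the identified morphism, which is the right-hand side. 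Using $(f\times\mathrm{id}_C)^*\Delta=\mathcal{D}$ one may further rewrite $\mathcal{O}_\Delta$ pulled back as $\mathcal{O}_{\mathcal{D}}$ if desired, matching the notation of Section \ref{ns}.

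The main obstacle I anticipate is verifying the hypotheses of cohomology-and-base-change cleanly for $\pi_1^*\Gamma$ on $C\times C_d$: one needs either that $R^1(\pi_2)_*\pi_1^*\Gamma$ is locally free (so that $(\pi_2)_*$ commutes with all base change) or that the formation of $(\pi_2)_*\pi_1^*\Gamma$ commutes with base change because $h^0$ is constant on fibres — but $h^0(\Gamma)=s+1$ is indeed constant here since the fibres of $\pi_2$ all map isomorphically to $C$ and $\pi_1^*\Gamma$ restricts to $\Gamma$ on each. That observation makes $(\pi_2)_*\pi_1^*\Gamma$ literally the trivial bundle $H^0(C,\Gamma)\otimes\mathcal{O}_{C_d}$, so its base change is automatic and the whole difficulty evaporates; the $\mathcal{O}_\Delta$-twisted term is handled because $\Delta\to C_d$ is finite flat of degree $d$, so that pushforward is exact and commutes with arbitrary base change. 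Once these two commutations are in hand, the rest is formal diagram-chasing and the lemma follows.
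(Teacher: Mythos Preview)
Your proposal is correct and follows essentially the same route as the paper: establish base-change isomorphisms $f^*(\pi_2)_*\mathcal{F}\cong(\bar\pi_2)_*(f\times\mathrm{id}_C)^*\mathcal{F}$ for $\mathcal{F}=\pi_1^*\Gamma$ and $\mathcal{F}=\pi_1^*\Gamma\otimes\mathcal{O}_\Delta$, check naturality in $\mathcal{F}$, and read off the identification of kernels. The only cosmetic difference is in the justification of base change---the paper verifies that both sheaves are flat over $C_d$ and then invokes \cite[Thm.~2.6, p.~175]{ACGH}, whereas you argue directly that $(\pi_2)_*\pi_1^*\Gamma$ is the trivial bundle $H^0(\Gamma)\otimes\mathcal{O}_{C_d}$ and that $\Delta\to C_d$ is finite flat; both routes are valid and yield the same conclusion.
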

\begin{proof}
\textbf{Claim:} $\pi_1^*(\Gamma)$ and $\pi_1^*(\Gamma)\otimes \mathcal{O}_{\Delta}$ are flat $\mathcal{O}_{C_d}$-modules. Indeed, observe first that $\pi_1^*(\Gamma)$ is flat as $\mathcal{O}_{C_d\times C}$-modules. The flatness of $\mathcal{O}_{C_d\times C}$ as $\mathcal{O}_{C_d}$-modules is a direct consequence of the commutative diagram
$$\begin{array}{cccccc}
&C_d\times C&\overset{\pi_2}\longrightarrow & C\\
\pi_1\!\!\!\!\!\!\!\!\!\!\!\!\!\!\!&\downarrow & & \downarrow \\
&C_d & \longrightarrow&  \Spec(k).\\
\end{array}$$

The flatness of $\pi_1^*(\Gamma)$ and $\pi_1^*(\Gamma)\otimes \mathcal{O}_{\Delta}$ as $\mathcal{O}_{C_d}$-modules, together with Theorem \cite[Thm. 2.6, page 175]{ACGH} applied to the morphism $C_d\times C\rightarrow C_d$ shows that
\begin{align}
f^*(\pi_1^*(\Gamma))\cong (\bar{\pi}_2)_*((f\times id_C)^*(\pi_1^*(\Gamma))
\end{align}
\begin{align}
f^*(\pi_1^*(\Gamma)\otimes \mathcal{O}_{\Delta})\cong
(\bar{\pi}_2)_*((f\times id_C)^*(\pi_1^*(\Gamma)\otimes \mathcal{O}_{\Delta}).
\end{align}
The lemma now is a direct consequence of the commutative diagram of vector bundles on $S$
$$\begin{array}{cccccc}
&f^*(\pi_1^*(\Gamma)) &\overset{f^*(\phi_{\Gamma})}\longrightarrow & f^*(\pi_1^*(\Gamma)\otimes \mathcal{O}_{\Delta})\\
\!\!\!\!\!\!\!\!\!\!\!\!\!\!\!&\downarrow & & \downarrow \\
&(\bar{\pi}_2)_*((f\times id_C)^*(\pi_1^*(\Gamma)) & \longrightarrow&  (\bar{\pi}_2)_*((f\times id_C)^*(\pi_1^*(\Gamma)\otimes \mathcal{O}_{\Delta}).\\
\end{array}$$
\end{proof}
\begin{thrm}\label{thm1}
For any analytic space $S$ and any family $\mathbb{E}$ of divisor series 
%$\delta^r_d(\Gamma)$'s
 on $C$
parametrized by $S$,
there is a unique morphism from $S$ to $\mathcal{V}^{s+1-d+r}_d(\Gamma)$ such that the pull back of $\mathcal{U}\mathcal{V}_d^{s+1-d+r}(\Gamma)$ is equivalent to $\mathbb{E}$.
\end{thrm}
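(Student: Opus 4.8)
The plan is to establish the universal property by the standard two-step argument: first reduce the classification of families of divisor series to the classification of sub-bundles of a fixed pulled-back vector bundle over $S$, and then invoke the universal property of the Grassmann bundle. Given a family $\mathbb{E}=(\mathcal{D},\mathcal{T})$ on $C$ parametrized by $S$, condition (I) provides, by the universal property of $C_d$, a unique morphism $f\colon S\to C_d$ with $(f\times \mathrm{id}_C)^*(\Delta)=\mathcal{D}$. Applying Lemma \ref{lem1} to this $\mathcal{D}$ and $f$, together with the base-change isomorphisms (2.2) and (2.3) from its proof, I would identify $\ker f^*(\phi_\Gamma)$ with the kernel of the natural map $(\bar\pi_2)_*((f\times\mathrm{id}_C)^*\pi_1^*\Gamma)\to (\bar\pi_2)_*((f\times\mathrm{id}_C)^*(\pi_1^*\Gamma\otimes\mathcal{O}_\Delta))$.

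Next I would reconcile this with the data in condition (II). The bundle $\mathcal{T}$ is prescribed to be a sub-bundle of $(\bar\pi_2)_*(\bar\pi_1^*\Gamma\otimes\mathcal{O}(\mathcal{D})^\vee)$ whose fibrewise evaluation map to $H^0$ is injective for every $s\in S$; dualizing the twist and using $\mathcal{D}=(f\times\mathrm{id}_C)^*\Delta$, this says precisely that $\mathcal{T}$ is a sub-bundle of $f^*(\pi_2)_*\pi_1^*\Gamma$ which, at each $s$, lands inside $\ker f^*(\phi_\Gamma)\otimes k(s)$. Hence the pair $(f,\mathcal{T})$ is the same datum as a morphism $S\to C_d$ together with a sub-bundle of the pullback of $(\pi_2)_*\pi_1^*\Gamma$ that is fibrewise contained in $\ker\phi_\Gamma$ — which by the construction of $\mathcal{V}^{s+1-d+r}_d(\Gamma)$ as the locus of couples $(D,\Lambda)$ with $\Lambda\subset\ker\phi_{\Gamma,D}$ inside the Grassmann bundle $G(s+1-d+r,(\pi_2)_*\pi_1^*\Gamma)$, is exactly a morphism $g\colon S\to\mathcal{V}^{s+1-d+r}_d(\Gamma)$ by the universal property of the Grassmann bundle. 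By construction $g$ composed with $e$ equals $f$, so $g^*(e^*\Delta)=\mathcal{D}$, and $g^*\mathcal{G}=\mathcal{T}$ as sub-bundles; thus $g^*\,\mathcal{U}\mathcal{V}^{s+1-d+r}_d(\Gamma)$ is equivalent to $\mathbb{E}$. Uniqueness follows because $e\circ g=f$ is forced by condition (I) and the uniqueness in the Grassmann-bundle universal property then pins down $g$.

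The point requiring the most care — and the expected main obstacle — is verifying that the sub-bundle condition in (II) matches the scheme-theoretic (not merely set-theoretic) structure of $\mathcal{V}^{s+1-d+r}_d(\Gamma)=\tilde{X}_{d-r}(\phi_\Gamma)$. The desingularization $\tilde{X}_k(\gamma)$ is defined inside the Grassmann bundle by the condition that the tautological sub-bundle be annihilated by the pulled-back $\gamma$; one must check that a sub-bundle $\mathcal{T}\hookrightarrow f^*\mathcal{F}$ is killed by $f^*\gamma$ as a map of sheaves precisely when each fibre $\mathcal{T}\otimes k(s)$ lies in $\ker\gamma_{f(s)}$ — i.e. that the fibrewise-injectivity hypothesis in (II) guarantees $\mathcal{T}$ is a sub-bundle (locally split) and not just a subsheaf, so that the induced map to the Grassmann bundle is a genuine morphism of schemes. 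This is where flatness and the base-change theorem from Lemma \ref{lem1} do the real work, ensuring that $\ker f^*\phi_\Gamma$ behaves well under further base change and that the resulting classifying map is unique over all of $S$, including non-reduced points. The remaining verifications — compatibility of the two notions of equivalence of families, and that $g$ is analytic when $S$ is merely an analytic space — are routine given that the Grassmann bundle represents its functor in the analytic category as well.
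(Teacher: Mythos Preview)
Your proposal is correct and follows essentially the same route as the paper: obtain $f\colon S\to C_d$ from the universal property of $\Delta$, use Lemma~\ref{lem1} together with condition~(II) to realize $\mathcal{T}$ as a rank $s+1-d+r$ sub-bundle of $f^*((\pi_2)_*\pi_1^*\Gamma)$ annihilated by $f^*(\phi_\Gamma)$, and then invoke the universal property of the Grassmann bundle to produce the unique morphism factoring through $\mathcal{V}^{s+1-d+r}_d(\Gamma)$. The paper's proof is terser and does not dwell on the scheme-theoretic and analytic subtleties you flag, but the logical skeleton is the same.
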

\begin{proof}
Let $\mathbb{E}=(\mathcal{D}, \mathcal{T})$ be a family of divisor series, $\delta^r_d(\Gamma)$'s, on $C$ parametrized by $S$. The universal property of $\Delta$ asserts that there is a morphism $f:S\rightarrow C_d$ such that
$(f\times id_C)^*(\Delta)=\mathcal{D}$. Condition (II) in \ref{condition2} together with Lemma \ref{lem1} makes it possible to view the vector bundle $\mathcal{T}$ as a vector sub-bundle of $f^*(H^0(\Gamma)\otimes \mathcal{O}_{C_d})$ contained in $f^*(\ker \phi_{\Gamma})$. The universal property of Grassmann bundles implies that the vector bundle $\mathcal{T}$ is the pull back of the universal sub-bundle via a unique section of $G(s+1-d+r, f^*(H^0(\Gamma)\otimes \mathcal{O}_{C_d}))\rightarrow S$. This section factors through the inclusion $\mathcal{V}^{s+1-d+r}_d(\Gamma)\subseteq G(s+1-d+r, H^0(\Gamma)\otimes \mathcal{O}_{C_d})$, since $\mathcal{T}$ is annihilated by $f^*(\phi_{\Gamma})$.
\end{proof}
\subsubsection{The Tangent Space of $\mathcal{V}^{s+1-d+r}_d(\Gamma)$}
 Theorem \ref{thm1} shows that
$T_{\mathbb{E}}(\mathcal{V}^{s+1-d+r}_d(\Gamma))
$ is the set of families of $\delta^r_d(\Gamma)$'s parametrized by
$\Spec(\mathbb{C}[\epsilon])$ reducing to $\mathbb{E}$. A family of this type is called a first order deformation of $\mathbb{E}$.
\begin{thrm}Let $\mathbb{E}=(D, T)\in \mathcal{V}^{s+1-d+r}_d(\Gamma)$.
Then, a first order deformation of $\mathbb{E}$ is in the form  $\mathbb{E}_{\epsilon}=(\mathcal{D}_{\epsilon}, \mathcal{T}_{\epsilon})$, where $\mathcal{D}_{\epsilon}$
is a first order deformation of $D$ and $\mathcal{T}_{\epsilon}\subset
 \Gamma_{\epsilon}(-\mathcal{D}_{\epsilon})$ extends $T$,
 in which $\Gamma_{\epsilon}$ is the trivial first order deformation of $\Gamma$.
\end{thrm}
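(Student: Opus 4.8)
The plan is to unwind the identification, recorded in the paragraph preceding the statement, of $T_{\mathbb{E}}(\mathcal{V}^{s+1-d+r}_d(\Gamma))$ with the set of families of $\delta^r_d(\Gamma)$'s parametrized by $\Spec(\mathbb{C}[\epsilon])$ that reduce to $\mathbb{E}$, and then simply to read off from the definition of such a family, in the special case $S=\Spec(\mathbb{C}[\epsilon])$, exactly the data $(\mathcal{D}_{\epsilon},\mathcal{T}_{\epsilon})$ claimed. Thus the proof is essentially a translation of conditions (I)--(II) into the Artinian setting, combined with Lemma \ref{lem1} and the universal property of Theorem \ref{thm1}.

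First I would treat the divisorial datum. By definition a family of divisor series over $S$ carries, as its first ingredient, a family $\mathcal{D}$ of degree $d$ divisors on $C$ parametrized by $S$, equivalently a morphism $S\to C_d$. Taking $S=\Spec(\mathbb{C}[\epsilon])$ and imposing that the family reduce to $\mathbb{E}=(D,T)$ forces the closed point to map to $D$, so this ingredient is precisely a first order deformation $\mathcal{D}_{\epsilon}$ of $D$, i.e. a tangent vector to $C_d$ at $D$; nothing else is to be checked here.

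Next I would handle the bundle datum. On $C\times\Spec(\mathbb{C}[\epsilon])$ the sheaf $\bar{\pi}_1^*\Gamma$ is pulled back along the projection that is constant in the $\epsilon$-direction, hence is by construction the trivial first order deformation $\Gamma_{\epsilon}$ of $\Gamma$; tensoring with $\mathcal{O}(\mathcal{D})^{\vee}=\mathcal{O}(-\mathcal{D}_{\epsilon})$ gives $\Gamma_{\epsilon}(-\mathcal{D}_{\epsilon})$. Since $\bar{\pi}_2$ is affine over the Artinian base, $(\bar{\pi}_2)_*$ is just the global-sections $\mathbb{C}[\epsilon]$-module, and condition (II) says that $\mathcal{T}_{\epsilon}$ is a locally free rank $(s+1-d+r)$ submodule of $(\bar{\pi}_2)_*\bigl(\Gamma_{\epsilon}(-\mathcal{D}_{\epsilon})\bigr)$ whose formation commutes with restriction to the closed point (the fiberwise injectivity). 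Over the local ring $\mathbb{C}[\epsilon]$ this is exactly the assertion that $\mathcal{T}_{\epsilon}$ is a sub-bundle, that is, a flat extension inside $\Gamma_{\epsilon}(-\mathcal{D}_{\epsilon})$ of its closed fiber; and the requirement that the family reduce to $\mathbb{E}$ says precisely that this closed fiber equals $T$. Hence the second datum is a sub-bundle $\mathcal{T}_{\epsilon}\subset\Gamma_{\epsilon}(-\mathcal{D}_{\epsilon})$ extending $T$. Conversely, any such pair $(\mathcal{D}_{\epsilon},\mathcal{T}_{\epsilon})$ visibly satisfies (I)--(II) and reduces to $\mathbb{E}$, so by Theorem \ref{thm1} it corresponds to a unique element of the tangent space.

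The one point that is not pure bookkeeping, and which I would spell out, is the compatibility of the description in (II) --- phrased through $(\bar{\pi}_2)_*(\bar{\pi}_1^*\Gamma\otimes\mathcal{O}(\mathcal{D})^{\vee})$ --- with the kernel description used in Lemma \ref{lem1} and Theorem \ref{thm1}, namely that $\mathcal{T}_{\epsilon}$ lands in $\ker f^*(\phi_{\Gamma})$. This follows by tensoring the structure sequence $0\to\mathcal{O}(-\mathcal{D}_{\epsilon})\to\mathcal{O}\to\mathcal{O}_{\mathcal{D}_{\epsilon}}\to 0$ with $\bar{\pi}_1^*\Gamma$ and pushing forward: left-exactness of $(\bar{\pi}_2)_*$ identifies $(\bar{\pi}_2)_*\bigl(\bar{\pi}_1^*\Gamma(-\mathcal{D}_{\epsilon})\bigr)$ with the kernel of $(\bar{\pi}_2)_*(\bar{\pi}_1^*\Gamma)\to(\bar{\pi}_2)_*(\bar{\pi}_1^*\Gamma\otimes\mathcal{O}_{\mathcal{D}_{\epsilon}})$, which by Lemma \ref{lem1} is $\ker f^*(\phi_{\Gamma})$. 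So containment in $\Gamma_{\epsilon}(-\mathcal{D}_{\epsilon})$ is the same as the defining condition, and I expect no genuine obstacle beyond making this identification explicit; once it is in place the theorem is a direct unraveling of the definitions over $\Spec(\mathbb{C}[\epsilon])$.
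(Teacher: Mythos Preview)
Your proposal is correct and follows essentially the same approach as the paper: both unwind the definition of a family of $\delta^r_d(\Gamma)$'s over $S=\Spec(\mathbb{C}[\epsilon])$, identify the divisorial datum as a first order deformation of $D$, and recognize $(\bar{\pi}_2)_*(\bar{\pi}_1^*\Gamma\otimes\mathcal{O}(\mathcal{D}_\epsilon)^\vee)$ as $\Gamma_\epsilon(-\mathcal{D}_\epsilon)$ so that condition (II) forces $\mathcal{T}_\epsilon$ to be a sub-bundle extending $T$. Your write-up is in fact more thorough than the paper's short argument --- you include the converse direction and the compatibility with $\ker f^*(\phi_\Gamma)$ via Lemma \ref{lem1} --- but these are elaborations on the same line of reasoning rather than a different method.
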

\begin{proof}
Assume $\mathbb{E}_{\epsilon}=(\mathcal{D}_{\epsilon}, \mathcal{T}_{\epsilon})$ is a family of $\delta^r_d(\Gamma)$'s parametrized by
$\Spec(\mathbb{C}[\epsilon])$. Then, $\mathcal{D}_{\epsilon}$ is a relative degree $d$ divisor on $\Spec(\mathbb{C}[\epsilon])$ and so is a first order deformation of $D$.

 For each $s\in \Spec(\mathbb{C}[\epsilon])$, the vector bundle
$ (\bar{\pi}_2)_*(\bar{\pi}_1^*\Gamma\otimes \mathcal{O}(\mathcal{D}_{\epsilon})^{\vee})$ satisfies
$$\lbrace (\bar{\pi}_2)_*[\bar{\pi}_1^*\Gamma\otimes \mathcal{O}(\mathcal{D}_\epsilon)^{\vee}]\rbrace\otimes k(s)\cong H^0(\Gamma(-\mathcal{D}_s)),$$
where $\mathcal{D}_s$ is the restriction of  $\mathcal{D}_{\epsilon}$ to $\lbrace s\rbrace\times C$. This implies that the vector bundle $ (\bar{\pi}_2)_*(\bar{\pi}_1^*\Gamma\otimes \mathcal{O}(\mathcal{D}_{\epsilon})^{\vee})$ might be viewed as the vector bundle
$ \Gamma_{\epsilon}(-\mathcal{D}_{\epsilon})$,  where $\Gamma_{\epsilon}$ is the trivial first order deformation of $\Gamma$.
So $T$ has to be extended to some sub-vector bundle $\mathcal{T}_{\epsilon}$ of $ \Gamma_{\epsilon}(-\mathcal{D}_{\epsilon})$.
\end{proof}
\begin{prop}\label{Proposition}
 Let $\mathbb{E}=(D, T)\in \mathcal{V}^{s+1-d+r}_d(\Gamma)$ corresponding to a divisor $D\in V^{s+1-d+r}_d(\Gamma)$
and an $(r+1)$-dimensional vector subspace $T$ of $H^0(\Gamma(-D))$. Denote by
$$\mu_{0, T}^{\Gamma}: H^0(D)\otimes T\rightarrow H^0(\Gamma)$$
the restriction of $\mu_{0}^{\Gamma}$ to $H^0(D)\otimes T$. \\
 The tangent space to $\mathcal{V}^{s+1-d+r}_d(\Gamma)$ at $\mathbb{E}$ fits into an exact sequence
$$0\rightarrow \Hom(T, H^0(\Gamma(-D))/T)\rightarrow T_{\mathbb{E}}(\mathcal{V}_d^{s+1-d+r}(\Gamma))\overset{e_*}\longrightarrow T_DC_d .$$
Furthermore, if $\bar{\eta}_T$ is the cup product $H^0(\mathcal{O}_D(D))\otimes T \rightarrow  H^0(\Gamma \otimes \mathcal{O}_D)$, then
$$\im e_*= \lbrace \nu \in H^0(\mathcal{O}_D(D)) \mid \bar{\eta}_T(\nu \otimes T)\subseteq \im \alpha_\Gamma\rbrace.$$
\end{prop}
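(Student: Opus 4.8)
The plan is to compute $T_{\mathbb E}(\mathcal V^{s+1-d+r}_d(\Gamma))$ via the deformation-theoretic description furnished by the preceding theorem (the one describing first order deformations of $\mathbb E$): a tangent vector at $\mathbb E=(D,T)$ is a first order deformation $\mathbb E_\epsilon=(\mathcal D_\epsilon,\mathcal T_\epsilon)$, where $\mathcal D_\epsilon$ is a first order deformation of $D$ in $C_d$ and $\mathcal T_\epsilon$ is a subbundle of $\Gamma_\epsilon(-\mathcal D_\epsilon)$ extending $T$. Throughout I use the canonical identification $T_DC_d\cong H^0(\mathcal O_D(D))$ coming from the Hilbert scheme structure of $C_d$, under which $\mathcal D_\epsilon$ corresponds to a section $\nu\in H^0(\mathcal O_D(D))$; with these identifications the map $e_*$ is ``forget $\mathcal T_\epsilon$'', sending $(\mathcal D_\epsilon,\mathcal T_\epsilon)$ to $\nu$.

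For the left end of the sequence I would identify $\ker e_*$, i.e.\ the deformations with $\nu=0$. Then $\mathcal D_\epsilon$ is the trivial deformation of $D$, so $\Gamma_\epsilon(-\mathcal D_\epsilon)=H^0(\Gamma(-D))\otimes_{\mathbb C}\mathbb C[\epsilon]$, and a choice of $\mathcal T_\epsilon$ is a free rank $(r+1)$ submodule of this reducing to $T$ modulo $\epsilon$. Such submodules are exactly the tangent vectors at $T$ to the Grassmannian $\Gr(r+1,H^0(\Gamma(-D)))$, that is, the elements of $\Hom(T,H^0(\Gamma(-D))/T)$; since equivalent families are here literally equal, this produces the injection $\Hom(T,H^0(\Gamma(-D))/T)\hookrightarrow T_{\mathbb E}(\mathcal V^{s+1-d+r}_d(\Gamma))$ with image $\ker e_*$, which is the claimed exactness on the left.

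The heart of the matter is the computation of $\im e_*$. Fix $\nu\in H^0(\mathcal O_D(D))$ with corresponding first order deformation $\mathcal D_\epsilon$ of $D$. Then $\nu\in\im e_*$ iff $T$ admits an extension $\mathcal T_\epsilon\subseteq\Gamma_\epsilon(-\mathcal D_\epsilon)$; choosing lifts of a basis of $T$, this happens iff $T$ lies in the image of the restriction-to-the-central-fibre map $\Gamma_\epsilon(-\mathcal D_\epsilon)\to H^0(\Gamma(-D))$. Taking cohomology of $0\to\Gamma(-D)\to\bar\pi_1^*\Gamma\otimes\mathcal O(-\mathcal D_\epsilon)\to\Gamma(-D)\to 0$ on $C\times\Spec\mathbb C[\epsilon]$ identifies that image with the kernel of the cup-product map $\nu'\cup(-)\colon H^0(\Gamma(-D))\to H^1(\Gamma(-D))$, where $\nu'\in H^1(\mathcal O_C)$ is the class of the line bundle deformation $\bar\pi_1^*\Gamma\otimes\mathcal O(-\mathcal D_\epsilon)$ of $\Gamma(-D)$, namely the image of $\nu$ under the coboundary of $0\to\mathcal O_C\to\mathcal O_C(D)\to\mathcal O_D(D)\to 0$. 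Hence $\nu\in\im e_*$ iff $\nu'\cup T=0$. To recast this, I would tensor $0\to\mathcal O_C\to\mathcal O_C(D)\to\mathcal O_D(D)\to 0$ by the line bundle $\Gamma(-D)$, getting $0\to\Gamma(-D)\to\Gamma\xrightarrow{\alpha_\Gamma}\Gamma\otimes\mathcal O_D\to 0$ with coboundary $\partial\colon H^0(\Gamma\otimes\mathcal O_D)\to H^1(\Gamma(-D))$ (so $\im\alpha_\Gamma=\ker\partial$), and invoke the naturality of cup products under connecting homomorphisms: for $t\in T$ one has $\nu'\cup t=\pm\,\partial\bigl(\bar\eta_T(\nu\otimes t)\bigr)$. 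Therefore $\nu'\cup T=0$ is equivalent to $\bar\eta_T(\nu\otimes T)\subseteq\im\alpha_\Gamma$, the asserted description of $\im e_*$.

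I expect the main obstacle to lie in the bookkeeping over the non-reduced base $\Spec\mathbb C[\epsilon]$ in the third step. One must check that lifts of a basis of $T$ generate a \emph{free} rank $(r+1)$ submodule $\mathcal T_\epsilon$ that is a subbundle in the sense of condition (II) above --- over $\Spec\mathbb C[\epsilon]$, having a single point, this reduces to the injectivity of $\mathcal T_\epsilon\otimes\mathbb C\to H^0(\Gamma(-D))$, and it is here that one uses that the reductions of these lifts are precisely the linearly independent sections spanning $T$ --- and, conversely, that every extension $\mathcal T_\epsilon$ of $T$ forces $T$ into the image of the restriction map. The sign in $\nu'\cup t=\pm\,\partial(\bar\eta_T(\nu\otimes t))$ never matters, as only the vanishing of these cup products is used, so I would not keep track of it.
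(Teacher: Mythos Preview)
Your proposal is correct and follows essentially the same strategy as the paper: identify tangent vectors at $\mathbb E$ with first order deformations $(\mathcal D_\epsilon,\mathcal T_\epsilon)$, and for the description of $\im e_*$ show that a section $t\in T$ extends to $\Gamma_\epsilon(-\mathcal D_\epsilon)$ if and only if the cup product $\nu'\cup t$ vanishes in $H^1(\Gamma(-D))$, which in turn is equivalent to $\bar\eta_T(\nu\otimes t)\in\im\alpha_\Gamma$ via the coboundary of $0\to\Gamma(-D)\to\Gamma\to\Gamma\otimes\mathcal O_D\to 0$. The paper carries this out by an explicit \v{C}ech-cocycle computation with transition functions---writing the deformed line bundle $\Gamma_\epsilon(-\mathcal D_\epsilon)$ as $\bar g_{\alpha\beta}(1-\epsilon\phi_{\alpha\beta})$ and verifying directly that the obstruction to lifting $\{s_\alpha\}$ is the cocycle $s_\alpha\phi_{\alpha\beta}$ representing $\phi\cdot s$---whereas you phrase the same step cohomologically via the short exact sequence $0\to\Gamma(-D)\to\bar\pi_1^*\Gamma\otimes\mathcal O(-\mathcal D_\epsilon)\to\Gamma(-D)\to 0$ and the naturality of cup products under connecting homomorphisms. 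Your treatment is in one respect more complete: you explicitly identify $\ker e_*$ with $\Hom(T,H^0(\Gamma(-D))/T)$ as the tangent space to the Grassmannian fibre, while the paper's written proof concentrates on $\im e_*$ and leaves the left end of the sequence implicit.
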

\begin{proof}
If $ D$ is locally defined by $ (U_{\alpha}, \lbrace f_{\alpha} \rbrace)$
then $(U_{\alpha}, \lbrace g_{\alpha, \beta}:=\frac{f_{\beta}}{f_{\alpha}} \rbrace)$ would be a
transition datum for $\mathcal{O(D)}$. As well,
if $(U_{\alpha}, \lbrace \gamma_{\alpha, \beta} \rbrace)$ determines the line bundle $\Gamma$, then the line bundle $\Gamma(-D)$ would be determined by $(U_{\alpha}, \lbrace \frac{\gamma_{\alpha, \beta}}{g_{\alpha, \beta}} \rbrace)$.

If $D_{\epsilon}$ is a first order deformation of $D$ associated to $\nu \in H^0(\mathcal{O}_D(D))$ and represented by
$(U_{\alpha, \epsilon}, \lbrace \tilde{f}_{\alpha} \rbrace)$, then $(U_{\alpha, \epsilon}, \lbrace \tilde{g}_{\alpha, \beta}:=\frac{\tilde{f}_{\beta}}{\tilde{f}_{\alpha}} \rbrace)$ would be a transition datum for $\mathcal{O}(D_{\epsilon})$,  such that
$$\tilde{g}_{\alpha, \beta}=g_{\alpha, \beta}(1+\epsilon \phi_{\alpha, \beta}) \quad where \quad \phi_{\alpha, \beta}+\phi_{\beta, \gamma}=\phi_{\alpha, \gamma}.$$
Consider that $\phi=\lbrace \phi_{\alpha, \beta}\rbrace \in H^1(\mathcal{O}_C)$ and $\delta (\nu)=\phi$, where $\delta$ is the coboundary map associated to the exact sequence
$$0\rightarrow \mathcal{O}_C\rightarrow \mathcal{O}(D)\rightarrow \mathcal{O}_D(D)\rightarrow 0.$$
 Furthermore, $\Gamma_{\epsilon}(-D_{\epsilon})$ would be represented by $(U_{\alpha, \epsilon}, \lbrace \tilde{\tilde {g}}_{\alpha, \beta} \rbrace)$, such that $\tilde{\tilde {g}}_{\alpha, \beta}=\frac{\tilde{\gamma}_{\alpha, \beta}}{\tilde {g}_{\alpha, \beta}}$, where by triviality of the deformation $\Gamma_{\epsilon}$, one has $\tilde{\gamma}_{\alpha, \beta}=
\gamma_{\alpha, \beta}$. These, imply that
\begin{align}
\tilde{\tilde {g}}_{\alpha, \beta}=
\frac{\gamma_{\alpha, \beta}}{g_{\alpha, \beta}+\epsilon g_{\alpha, \beta} \phi_{\alpha, \beta}}=\frac{\gamma_{\alpha, \beta}}{g_{\alpha, \beta}}[1+\epsilon(-\phi_{\alpha, \beta})].
\end{align}
In order to lift a section $s\in H^0(\Gamma(-D))$ which is represented by $\lbrace s_{\alpha} \rbrace$ with
\begin{align}
s_{\alpha}= \frac{\gamma_{\alpha, \beta}}{g_{\alpha, \beta}}s_{\beta}\quad
 on \quad U_{\alpha}\cap U_{\beta},
 \end{align}
 to  a section $\tilde{s}$ of $\Gamma_{\epsilon}(-D_{\epsilon})$ it is necessary and sufficient for $\tilde{s}$ to be represented by $\tilde{s}_{\alpha}$ with $\tilde{s}_{\alpha}=\frac{\tilde{\gamma}_{\alpha, \beta}}{\tilde {g}_{\alpha, \beta}}\tilde{s}_{\beta}$ on $U_{\alpha,\epsilon}\cap U_{\beta,\epsilon}$ such that one has locally
\begin{align}\label{10}
\tilde{s}_{\alpha}=s_{\alpha}+\epsilon\acute{s}_{\alpha}.
\end{align}
Setting $\bar{g}_{\alpha, \beta}:=\frac{\gamma_{\alpha, \beta}}{g_{\alpha, \beta}}$, the equation (\ref{10}) is equivalent to say that 
\begin{align}
 \quad s_{\alpha}=\bar{g}_{\alpha, \beta}\cdot s_{\beta} \quad on \quad U_{\alpha}\cap U_{\beta},
\end{align}
\begin{align}\label{11}
\bar{g}_{\alpha,\beta}. \acute{s}_{\beta}-\acute{s}_{\alpha}=s_{\alpha}.\phi_{\alpha,\beta}, \quad on \quad U_{\alpha}\cap U_{\beta}.
\end{align}
It is an immediate computation to see that the right-hand side in (\ref{11}) is a cocycle representing the cup-product $\phi . s\in H^1(\Gamma(-D))$ under the natural pairing
$$H^1(\mathcal{O}_C)\otimes H^0(\Gamma(-D))\rightarrow H^1(\Gamma(-D)).$$
 Consider the commutative diagram of vector spaces
 $$\begin{array}{cccccc}
&H^0(\mathcal{O}_D(D))\otimes H^0(\Gamma(-D)) &\overset{\delta \otimes 1}\longrightarrow & H^1(\mathcal{O}_C)\otimes H^0(\Gamma(-D))&\\
\bar{\eta} \!\!\!\!\!\!\! \!\!\!\!\!\!\!  \!\!\!\!\!\!\! \!\!\!\!\!\!\! \!\!\!\!\!\!\! \!\!\!\!\!\!\! \!\!\!\!\!\!\! \!\!\!\!\!\!\! \!\!\!\!\!\!\! \!\!\!\!\!\!\! &\downarrow & & \downarrow & \!\!\!\!\!\!\! \!\!\!\!\!\!\!  \!\!\!\!\!\!\! \!\!\!\!\!\!\!  \!\!\!\!\!\!\! \!\!\!\!\!\!\!  \!\!\!\!\!\!\! \eta\\
&H^0(\Gamma\otimes \mathcal{O}_D) & \overset{\bar{\delta}}\longrightarrow&  H^1(\Gamma(-D)).&\\
\end{array}$$
and observe that $[\eta \circ (\delta \otimes 1)](\nu\otimes s)=0$ in $H^1(\Gamma(-D))$. So the commutativity of diagram implies $\bar{\eta}(\nu\otimes s)\in \ker (\bar{\delta})=\im \alpha_{\Gamma}$. This finishes the proof.
\end{proof}
\begin{thrm}
For $D\in V^r_d(\Gamma)$, consider the set $\bar{I}\subseteq H^0(\mathcal{O}_D(D))\times \Gr(s+1-d+r, H^0(\Gamma(-D)))$
defined by
$$ \bar{I}:=\lbrace (\nu, T)\mid
 \bar{\eta}_T(\nu \otimes T)\subseteq \im \alpha_\Gamma
 \rbrace.
$$
Then, the tangent cone of $V^r_d(\Gamma)$ at $D$ coincides on $I:=\pi_1(\bar{I})$ set theoretically, where $\pi_1$ is the first projection on $H^0(\mathcal{O}_D(D))$.
\end{thrm}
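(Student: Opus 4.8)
The plan is to read the statement through the resolution $e\colon\mathcal{V}^{s+1-d+r}_d(\Gamma)\to V^r_d(\Gamma)\subseteq C_d$ together with the local cup--product computation already carried out in Proposition~\ref{Proposition}. First I would record that $e$ is proper---it is the restriction of the projection of a Grassmann bundle over the projective scheme $C_d$---and set-theoretically surjective onto $V^r_d(\Gamma)$, with $e^{-1}(D)=\Gr(s+1-d+r,H^0(\Gamma(-D)))$, this fibre being nonempty precisely because $D\in V^r_d(\Gamma)$ forces $h^0(\Gamma(-D))\ge s+1-d+r$. Evaluating Proposition~\ref{Proposition} at each $\mathbb{E}=(D,T)$ lying over $D$ gives $\im\!\bigl(e_*\colon T_{\mathbb{E}}\mathcal{V}^{s+1-d+r}_d(\Gamma)\to T_DC_d\bigr)=\{\nu\in H^0(\mathcal{O}_D(D))\colon\bar\eta_T(\nu\otimes T)\subseteq\im\alpha_\Gamma\}$; hence $I=\pi_1(\bar I)=\bigcup_{T\in e^{-1}(D)}\im(e_*|_{\mathbb{E}})$ is exactly the union, over the fibre of $e$ above $D$, of the images of the Zariski tangent spaces of $\mathcal{V}^{s+1-d+r}_d(\Gamma)$. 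Since $\bar I$ is closed and the Grassmannian is complete, $I$ is a closed cone in $T_DC_d=H^0(\mathcal{O}_D(D))$, and it remains to identify it with the tangent cone $\mathcal{C}_D(V^r_d(\Gamma))$.

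For the inclusion $\mathcal{C}_D(V^r_d(\Gamma))\subseteq I$ I would use the local determinantal presentation, transporting \cite[Ch.~IV]{ACGH} verbatim. Choose coordinates $z$ on $C_d$ centred at $D$ and represent $\phi_\Gamma$ by a $d\times(s+1)$ matrix $M(z)$, so that locally $V^r_d(\Gamma)=\{z\colon\rk M(z)\le d-r\}$; set $\rho:=h^0(\Gamma)-h^0(\Gamma(-D))=\rk M(0)$. After a smooth change of bases making $M(0)$ block-diagonal with an invertible $\rho\times\rho$ block, the ideal of $(d-r+1)$-minors of $M$ becomes the ideal of $(d-r-\rho+1)$-minors of the Schur complement $\widetilde N$, a $(d-\rho)\times(s+1-\rho)$ matrix with $\widetilde N(0)=0$ whose linear term $d\widetilde N(0)\colon H^0(\mathcal{O}_D(D))\to\Hom\!\bigl(H^0(\Gamma(-D)),\,H^0(\Gamma\otimes\mathcal{O}_D)/\im\alpha_\Gamma\bigr)$ is, under these identifications, the map $\nu\mapsto\bigl(t\mapsto\bar\eta(\nu\otimes t)\bmod\im\alpha_\Gamma\bigr)$; this last point is precisely the coordinate form of the cup--product argument in the proof of Proposition~\ref{Proposition}. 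Because $\widetilde N(0)=0$, the lowest-degree part of each $(d-r-\rho+1)$-minor of $\widetilde N(z)$ is the corresponding minor of $d\widetilde N(0)$, so the initial ideal of $V^r_d(\Gamma)$ at $D$ contains the $(d-r-\rho+1)$-minors of $d\widetilde N(0)$, and therefore $\mathcal{C}_D(V^r_d(\Gamma))\subseteq\{\nu\colon\rk d\widetilde N(0)(\nu)\le d-r-\rho\}$. Finally, $\rk d\widetilde N(0)(\nu)\le d-r-\rho$ holds iff some $(s+1-d+r)$-dimensional $T\subseteq H^0(\Gamma(-D))$ satisfies $d\widetilde N(0)(\nu)|_T=0$, i.e.\ $\bar\eta_T(\nu\otimes T)\subseteq\im\alpha_\Gamma$; so the right-hand locus is exactly $I$, and the inclusion follows.

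For the reverse inclusion $I\subseteq\mathcal{C}_D(V^r_d(\Gamma))$ I would argue that the inclusion of initial ideals above is in fact an equality, again exactly as for $W^r_d$ in \cite{ACGH}. The generic determinantal locus $\{\rk\le d-r-\rho\}$ in the space of $(d-\rho)\times(s+1-\rho)$ matrices is reduced, irreducible and Cohen--Macaulay of the expected codimension (Eagon--Northcott), and $V^r_d(\Gamma)$ is its pull-back under the entries of $\widetilde N$; when $V^r_d(\Gamma)$ has the expected dimension at $D$---the natural hypothesis---the specialization to the tangent cone commutes with this pull-back, so the initial ideal is generated exactly by the $(d-r-\rho+1)$-minors of $d\widetilde N(0)$ and the two cones coincide set-theoretically. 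Equivalently, one may push down arcs: for $(\nu,T)\in\bar I$, pick $\tilde v\in T_{(D,T)}\mathcal{V}^{s+1-d+r}_d(\Gamma)$ with $e_*\tilde v=\nu$ by Proposition~\ref{Proposition}, realize $\tilde v$ by an arc in $\mathcal{V}^{s+1-d+r}_d(\Gamma)$ through $(D,T)$ (available, e.g., when $\mathcal{V}^{s+1-d+r}_d(\Gamma)$ is smooth at $(D,T)$), and push it down by $e$ to obtain an arc in $V^r_d(\Gamma)$ with initial direction proportional to $\nu$; letting $T$ range over $e^{-1}(D)$ then gives $I\subseteq\mathcal{C}_D(V^r_d(\Gamma))$.

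The main obstacle is precisely this passage from Zariski tangent spaces to the honest tangent cone. The inclusion $\mathcal{C}_D(V^r_d(\Gamma))\subseteq I$ is robust and requires nothing beyond linearizing the defining minors, but its converse---that no extra initial forms appear beyond the minors of the linearization, equivalently that every $\nu$ in the linearized determinantal cone is swept out by an arc in $V^r_d(\Gamma)$ even where $\mathcal{V}^{s+1-d+r}_d(\Gamma)$ is singular over $D$---is exactly the point at which the determinantal machinery of \cite[Ch.~II and IV]{ACGH} (Cohen--Macaulayness and reducedness of generic determinantal schemes, the properness of $e$, and its generic injectivity over $V^r_d(\Gamma)\setminus V^{r+1}_d(\Gamma)$) has to be imported. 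I would isolate this as a lemma---``the set-theoretic tangent cone at a point of a degeneracy locus of expected codimension equals the degeneracy locus of the linear term of the associated Schur complement''---prove it once by the argument of \cite{ACGH}, and then apply it to $M=\phi_\Gamma$.
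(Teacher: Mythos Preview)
Your approach is essentially the paper's. Both arguments run through the resolution $e\colon\mathcal{V}^{s+1-d+r}_d(\Gamma)\to V^r_d(\Gamma)$ and use Proposition~\ref{Proposition} to identify, for each $T$ in the fibre $e^{-1}(D)$, the image $\im(e_*|_{(D,T)})$ with the slice $\{\nu:\bar\eta_T(\nu\otimes T)\subseteq\im\alpha_\Gamma\}$; the set $I=\pi_1(\bar I)$ is then the union of these images over the fibre. The paper's proof is literally a one-line citation: the Corollary on p.~66 of \cite{ACGH} (relating the tangent cone of a degeneracy locus to the tangent spaces of its canonical blow-up along the fibre) together with Proposition~\ref{Proposition}. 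What you have written in your second and third paragraphs---the Schur-complement reduction, the identification of the linear term with the cup product, and the Eagon--Northcott input---is an explicit unpacking of that ACGH Corollary rather than a different method.

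One remark on scope. The paper states the theorem with no hypothesis on $D$ beyond $D\in V^r_d(\Gamma)$ and proves it by direct appeal to the ACGH Corollary; in particular it does \emph{not} introduce the expected-dimension or smoothness assumptions you invoke for the inclusion $I\subseteq\mathcal{C}_D(V^r_d(\Gamma))$. So if you keep your write-up, you should either (i) drop those hypotheses and cite the ACGH Corollary exactly as the paper does, or (ii) state your isolated lemma in the generality of that Corollary rather than only under expected codimension. Your arc-lifting alternative (``realize $\tilde v$ by an arc when $\mathcal{V}^{s+1-d+r}_d(\Gamma)$ is smooth at $(D,T)$'') is strictly weaker than what the paper asserts, since the theorem is meant to hold at every $D\in V^r_d(\Gamma)$, including points over which the resolution is singular.
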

\begin{proof}
An application of the Corollary in page 66 of \cite{ACGH} together with Proposition \ref{Proposition} shows
$$\mathcal{T}_D(V^r_d(\Gamma))=I,$$
set theoretically.
\end{proof}
\begin{remark}\label{remark1}
Assume that $\{\gamma_1, \cdots, \gamma_{s+1}\}$ is a basis for $H^0(\Gamma)$. The Brill-Noether matrix $(\gamma_i(p_j))_{i,j}$ defines the structure of $V^r_d(\Gamma)$ locally. This allows one, to interpret $H^0(\Gamma\otimes \mathcal{O}_D)^*$ as the tangent space of $C_d$ at $D$, which is the same as identifying $H^0(\mathcal{O}_D(D))$ with $H^0(\Gamma\otimes \mathcal{O}_D)^*$. If $\{ \frac{1}{z_i}\}_i$ is a basis for $H^0(\mathcal{O}_D(D))$, then such an identification might be given explicitly as
\begin{align}
 \Theta: \frac{1}{z_i} \in H^0(\mathcal{O}_D(D))&\mapsto
(\frac{\gamma_i-p_1}{z_i}(p_1), \cdots, \frac{\gamma_i-p_d}{z_i}(p_d))^* \in H^0(\Gamma\otimes\mathcal{O}_D)^*,
\end{align}
where $z_i$ is a local coordinate around $p_i$ and for $v$ in a vector space $V$, we denote by $v^*\in V^*$ the linear map by $v^*(\lambda v)=\lambda$ and zero, otherwise.
\end{remark}

In order to obtain Theorem \ref{thm 3}, we make the following hypothesis
\vspace{.2cm}

\noindent \textbf{Hypothesis A:}\label{assumption}
Consider the set $\bar{J}\subseteq H^0(\Gamma)^*\times \Gr(s+1-d+r, H^0(\Gamma(-D)))$, defined by
$$\bar{J}:=\lbrace (\gamma, T)\mid \gamma \perp\mu_0^\Gamma(H^0(D)\otimes T) \rbrace,$$
and assume that the map $\Theta$ is such that setting
 $J:=\bar{\pi}_1(\bar{J})$ the set $(\alpha_\Gamma^*)^{-1}(J)$ coincides on $I$, where
 $$\alpha_\Gamma^*:H^0(\Gamma\otimes \mathcal{O}_D)^*\rightarrow H^0(\Gamma)^*,$$
is the dual of $\alpha_{\Gamma}$ and $\bar{\pi}_1$ is the projection on $H^0(\Gamma)^*$.

Consider the set $\bar{J}\subseteq H^0(\Gamma)^*\times \Gr(s+1-d+r, H^0(\Gamma(-D)))$, defined by
$$\bar{J}:=\lbrace (\gamma, T)\mid \gamma \perp\mu_0^\Gamma(H^0(D)\otimes T) \rbrace,$$
and assume that the map $\Theta$ is such that setting
 $J:=\bar{\pi}_1(\bar{J})$ the set $(\alpha_\Gamma^*)^{-1}(J)$ coincides on $I$, where
 $$\alpha_\Gamma^*:H^0(\Gamma\otimes \mathcal{O}_D)^*\rightarrow H^0(\Gamma)^*,$$
is the dual of $\alpha_{\Gamma}$ and $\bar{\pi}_1$ is the projection on $H^0(\Gamma)^*$.
%\end{itemize}
\begin{thrm}\label{thm 3}
Together with Hypothesis A, assume that for each $T\in \Gr (s+1-d+r, H^0(\Gamma(-D)))$, the
map $\eta_T: H^0(D)\otimes T\rightarrow H^0(\Gamma)$, is injective and $h^0(\Gamma(-D))< h^0(D)+s+1-d+r$. Assume moreover that the scheme $\mathcal{V}^r_d(\Gamma)$ is of dim $=\exp. dim V^r_d(\Gamma)$ in a neighborhood $e^{-1}(D)$. Then $\mathcal{T}_D(V^r_d(\Gamma))$, the tangent cone of $V^r_d(\Gamma)$ at $D$, is generically a $\mathbb{C}^r$-bundle on a reduced, normal and Cohen-Macauley variety $J\subset H^0(\Gamma)^*$.
\end{thrm}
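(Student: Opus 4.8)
The plan is to push the tangent cone off $C_d$ into the fixed vector space $H^0(\Gamma\otimes\mathcal O_D)^*\cong T_DC_d$, where Hypothesis~A presents it as a linear preimage of $J$, and then to control $J$ through a smooth incidence variety $\bar J$ fibred over a Grassmannian.

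\textbf{Step 1 (the tangent cone as a pullback of $J$).} The theorem immediately preceding Remark~\ref{remark1} gives $\mathcal{T}_D(V^r_d(\Gamma))=I=\pi_1(\bar I)$ set-theoretically. Feeding in the identification $\Theta$ of Remark~\ref{remark1} of $H^0(\mathcal O_D(D))$ with $H^0(\Gamma\otimes\mathcal O_D)^*$ and Hypothesis~A, one obtains a surjection $q\colon\mathcal{T}_D(V^r_d(\Gamma))\to J$, induced on first projections by the map $\bar I\to\bar J$, $(\nu,T)\mapsto(\alpha_{\Gamma}^*\Theta(\nu),T)$, which is well defined precisely because Hypothesis~A says $(\alpha_{\Gamma}^*)^{-1}(J)$ coincides with $I$. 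Thus it remains to analyze the fibres of $q$ and the geometry of $J$.

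\textbf{Step 2 (the generic fibre of $q$ is $\mathbb C^r$).} For $(\gamma,T)$ general in $\bar J$ the fibre of $q$ over $\gamma$ is a translate of the intersection of $\ker\alpha_{\Gamma}^*$ with the linear space $\im e_*$ described in Proposition~\ref{Proposition}. One has $\dim\ker\alpha_{\Gamma}^*=d-\rk\alpha_{\Gamma}=d-\big(h^0(\Gamma)-h^0(\Gamma(-D))\big)\ge r$, with equality exactly when $h^0(\Gamma)-h^0(\Gamma(-D))=d-r$; the hypothesis that $\mathcal{V}^{s+1-d+r}_d(\Gamma)$ has dimension $\exp.\dim V^r_d(\Gamma)$ in a neighbourhood of $e^{-1}(D)$, together with the equidimensionality of $\mathcal{T}_D(V^r_d(\Gamma))$ it forces, pins the tangent cone down to the expected dimension and hence forces the generic fibre of $q$ to be exactly $\mathbb C^r$. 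Here the uniform injectivity of $\eta_T$ for all $T$ (hence the constancy of $\dim\mu_0^\Gamma(H^0(D)\otimes T)=h^0(D)(s+1-d+r)$ and of $\dim\Lambda_T$, where $\Lambda_T:=\mu_0^\Gamma(H^0(D)\otimes T)^\perp$), together with the strict inequality $h^0(\Gamma(-D))<h^0(D)+s+1-d+r$, is what guarantees the relevant dimensions lie in the balanced range, so that no fibre of $q$ is forced to jump over a dense open of $J$. Local triviality over the smooth locus of $J$ then follows because $q$ is swept by parallel affine fibres of a fixed linear projection, so $q$ is generically a $\mathbb C^r$-bundle.

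\textbf{Step 3 ($J$ is reduced, normal, Cohen--Macaulay).} Realize $\bar J$ as the total space of the vector sub-bundle of $\Gr(s+1-d+r,H^0(\Gamma(-D)))\times H^0(\Gamma)^*$ whose fibre over $T$ is $\Lambda_T$; the uniform injectivity of $\eta_T$ makes this a genuine sub-bundle of constant rank, so $\bar J$ is smooth and irreducible and $J=\bar{\pi}_1(\bar J)$ is irreducible. The strict inequality $h^0(\Gamma(-D))<h^0(D)+s+1-d+r$ is used to show that a general $\gamma\in J$ recovers $\mu_0^\Gamma(H^0(D)\otimes T)$, hence $T$, uniquely, so that $\bar{\pi}_1\colon\bar J\to J$ is birational and is an isomorphism away from a closed subset of codimension $\ge 2$. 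On the other hand the degeneracy-locus description $\mathcal{V}^{s+1-d+r}_d(\Gamma)=\tilde{X}_{d-r}(\phi_{\Gamma})$ inside a Grassmann bundle, combined with the expected-dimension hypothesis, shows that $\mathcal{V}^{s+1-d+r}_d(\Gamma)$ is Cohen--Macaulay near $e^{-1}(D)$ and, being the canonical desingularization, generically smooth there; transporting Cohen--Macaulayness down the smooth surjection $q$ (Cohen--Macaulayness being local for the flat topology on the base of a smooth morphism) gives that $J$ is Cohen--Macaulay, whereupon Serre's criterion ($S_2$ from Cohen--Macaulayness, $R_1$ because $\bar{\pi}_1$ is an isomorphism off codimension $\ge 2$) gives normality, and reducedness follows by descent of reducedness along the dominant map $\bar{\pi}_1$ from the reduced (indeed smooth) $\bar J$.

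\textbf{Main obstacle.} The genuinely delicate point is the bridge between the \emph{global} expected-dimension hypothesis on $\mathcal{V}^{s+1-d+r}_d(\Gamma)$ and the \emph{local} facts used above at the single divisor $D$: that $\mathcal{T}_D(V^r_d(\Gamma))$ has the expected codimension with no embedded components, so that the generic fibre of $q$ has dimension exactly $r$; and that the resolution $\bar{\pi}_1\colon\bar J\to J$ contracts only a codimension-$\ge 2$ locus. This last assertion is exactly where the strict inequality $h^0(\Gamma(-D))<h^0(D)+s+1-d+r$ and the uniform injectivity of the $\eta_T$ must be exploited, and pinning it down requires a careful count of the fibre dimensions of $\bar{\pi}_1$. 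Making the descent of normality and Cohen--Macaulayness along the only-generically-a-bundle map $q$ fully rigorous — working over the good open locus of $J$ and extending by the $S_2$ property — is the remaining technical burden.
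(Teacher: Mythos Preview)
Your Steps~1 and~2 track the paper closely: the set-theoretic identification $\mathcal{T}_D(V^r_d(\Gamma))=I$, Hypothesis~A turning this into $(\alpha_\Gamma^*)^{-1}(J)$, the irreducibility of $\bar J$ (hence of $J$ and $I$) as a vector bundle over the Grassmannian, and the dimension count forcing the generic fibre of $\lambda\colon I\to J$ to be $\mathbb C^r$ are all exactly what the paper does.

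The real divergence is Step~3. The paper does \emph{not} try to manufacture the Cohen--Macaulay, normal, reduced properties of $J$ by hand. It simply invokes the Lemma on page~242 of \cite{ACGH} (the Kempf-type statement that, for the tautological bundle on a Grassmannian, the image in $V^*$ of the annihilator bundle is reduced, normal, and Cohen--Macaulay once the relevant multiplication map is injective). The injectivity hypothesis on every $\eta_T$ and the numerical inequality $h^0(\Gamma(-D))<h^0(D)+s+1-d+r$ are exactly what put $J$ inside the range where that lemma applies; the paper then concludes in one line.

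Your route in Step~3 is not just longer; as you yourself flag in the ``Main obstacle,'' it has a genuine gap. You want to descend Cohen--Macaulayness to $J$ along $q\colon I\to J$, but $q$ is only \emph{generically} a $\mathbb C^r$-bundle, and that generic triviality is part of the conclusion, not an independently established smoothness or flatness of $q$. Moreover, the Cohen--Macaulayness you invoke is for $\mathcal{V}^{s+1-d+r}_d(\Gamma)$ near $e^{-1}(D)$, which is not the same object as $I=\mathcal{T}_D(V^r_d(\Gamma))$; you never explain how CM for $\mathcal{V}$ transfers to CM for the tangent cone $I$ of the \emph{base} $V^r_d(\Gamma)$. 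Likewise, the assertion that $\bar\pi_1\colon\bar J\to J$ is an isomorphism off codimension~$\geq 2$ is asserted but not argued. All of this is precisely what the ACGH lemma packages up; once you cite it, Step~3 collapses to a sentence and the circularity disappears.
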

\begin{proof}
Note that the scheme structures on $I$ and $\mathcal{T}_D(V^r_d(\Gamma))$ are compatible with the  structures of the schemes fitted in the commutative diagram
 $$\begin{array}{cccccc}
&\bar{I} &\overset{\zeta \otimes 1}\longrightarrow & \bar{J}&\\
\bar{\eta} \!\!\!\!\!\!\! \!\!\!\!\!\!\!  \!\!\!\!\!\!\! \!\!\!\!\!\!\! \!\!\!\!\!\!\! \!\!\!\!\!\!\! \!\!\!\!\!\!\! \!\!\!\!\!\!\! \!\!\!\!\!\!\! \!\!\!\!\!\!\! &\downarrow & & \downarrow & \!\!\!\!\!\!\! \!\!\!\!\!\!\!  \!\!\!\!\!\!\! \!\!\!\!\!\!\!  \!\!\!\!\!\!\! \!\!\!\!\!\!\!  \!\!\!\!\!\!\! \eta\\
&I & \overset{\alpha_\Gamma^*}\longrightarrow& J,&\\
\end{array}$$
induced from
 $$\begin{array}{cccccc}
&H^0(\mathcal{O}_D(D))\times H^0(\Gamma(-D)) &\overset{\zeta \otimes 1}\longrightarrow & H^0(\Gamma)^*\times H^0(\Gamma(-D))&\\
\bar{\eta} \!\!\!\!\!\!\! \!\!\!\!\!\!\!  \!\!\!\!\!\!\! \!\!\!\!\!\!\! \!\!\!\!\!\!\! \!\!\!\!\!\!\! \!\!\!\!\!\!\! \!\!\!\!\!\!\! \!\!\!\!\!\!\! \!\!\!\!\!\!\! &\downarrow & & \downarrow & \!\!\!\!\!\!\! \!\!\!\!\!\!\!  \!\!\!\!\!\!\! \!\!\!\!\!\!\!  \!\!\!\!\!\!\! \!\!\!\!\!\!\!  \!\!\!\!\!\!\! \eta\\
&[H^0(\Gamma\otimes \mathcal{O}_D)]^* & \overset{\alpha_\Gamma^*}\longrightarrow&  H^0(\Gamma)^*,&\\
\end{array}$$
where we are denoting $\alpha_\Gamma^*\circ \Theta$ by $\zeta$.
This shows $\mathcal{T}_D(V^r_d(\Gamma))=I$, scheme theoretically as well.

In order to finish the proof of theorem, denoting by $\lambda$ the restriction of $\zeta^*$ to $I$, it is enough to prove $\lambda(I)=J$. To do so,
the scheme $\bar{J}$, being a vector bundle on $\Gr(s+1-d+r, H^0(\Gamma(-D)))$, is irreducible, implying the irreducibility of $J$. For a similar reason $I$ comes to be irreducible.

The Lemma in page 242 of \cite{ACGH} applied to the injectivity assumption, implies that $J$ has the claimed properties.

Fianlly, a dimension computation indicates that $\lambda(I)$ can not include $J$ strictly, verifying $\lambda(I)=J$. Meanwhile, the computation indicates that for a general $j\in J$, the dimension of the fiber of $\lambda$ at $j$ equals $r$.
\end{proof}
\begin{remark}
The canonical bundle satisfies in the assumption \ref{assumption}, so the tangent cone of $C^r_d$ at a point $D\in C^r_d$ is generically a $P^r$-bundle on the tangent cone of $W^r_d$ at $L=\mathcal{O}(D)\in W^r_d$.
\end{remark}
\section{A Tangent Space Comparision}
The tangent space to $V^{r}_d(\Gamma)$ at a point $D\in V^{r}_d(\Gamma)\setminus V^{r+1}_d(\Gamma)$ has been described by M. Coppens in \cite[Thm. 0.3]{M. Cop.} as;
$$T_D(V^{r}_d(\Gamma))=
\bigcap _{\xi\in H^0(\Gamma(-D))}\lbrace
 \beta_{\xi}^{-1}(\im(\phi_{\Gamma}^D))  \rbrace,$$
 where for $\xi\in H^0(\Gamma(-D))$ the map $\beta_{\xi}:H^0(\mathcal{O}_D(D))\rightarrow H^0(\Gamma\otimes \mathcal{O}_D)$ is defined by $\nu \mapsto \nu\otimes \xi$ and $\phi_{\Gamma}^D$ is the morphism induced by $\phi_{\Gamma}$ at the point $D$.
 This interpretation describes the tangent space as a subspace of
 the space of first order deformations of $D$, where $D$ is considered as a closed subscheme of $C$.
\begin{thrm}\label{comparision theorem 1}
 Let $x\in C$ be a general point such that $D\in V^{r}_{d}(\Gamma)\setminus V^{r+1}_{d}(\Gamma),  D+x\in V^{r}_{d+1}(\Gamma)\setminus V^{r+1}_{d+1}(\Gamma)$ and $D\in V^{r}_{d}(\Gamma(-x))\setminus V^{r+1}_{d}(\Gamma(-x))$. Then
$$
\begin{array}{cccc}
(a)& \dim T_D(V^{r}_d(\Gamma))&\geq &\dim T_{D+x}(V^{r}_{d+1}(\Gamma))-(r+1),\\
(b) &\dim T_D(V^{r}_{d}(\Gamma))&\geq &\!\!\!\!\!\!\!\!\!\!\!\!\!\!\!\!\!\! \dim T_{D} V^{r}_{d}(\Gamma(-x))-r\\
(c) &\dim V^{r}_{d}(\Gamma)\!\!\!\!\!\!\!\!\!\!\!\!&\geq &\!\!\!\!\!\!\!\!\!\!\!\!\!\!\!\!\!\!\!\!\!\!\!\!\! \dim  V^{r}_{d}(\Gamma(-x))-r.
\end{array}
$$
\end{thrm}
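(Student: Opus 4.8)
The plan is to deduce all three inequalities from Coppens' description of $T_D(V^r_d(\Gamma))$ recalled above, after recording a few elementary consequences of the genericity of $x$ and of the three ``$\setminus V^{r+1}$'' hypotheses. Since $x$ is general we have $x\notin D$, hence canonical decompositions $H^0(\mathcal{O}_{D+x}(D+x)) = H^0(\mathcal{O}_D(D))\oplus H^0(\mathcal{O}_x(x))$ and $H^0(\Gamma\otimes\mathcal{O}_{D+x}) = H^0(\Gamma\otimes\mathcal{O}_D)\oplus(\Gamma\otimes k(x))$ with one‑dimensional second summands; write $a_x\colon C_d\to C_{d+1}$, $D'\mapsto D'+x$, for the addition map and $da_x\colon H^0(\mathcal{O}_D(D))\hookrightarrow H^0(\mathcal{O}_{D+x}(D+x))$ for its differential, which is the inclusion of the first summand. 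Restriction to $H^0(\Gamma(-x))\subseteq H^0(\Gamma)$ together with the equality $H^0(\Gamma\otimes\mathcal{O}_D) = H^0((\Gamma(-x))\otimes\mathcal{O}_D)$ (again because $x\notin D$) identifies $\phi^D_{\Gamma(-x)}$ with the restriction of $\phi^D_\Gamma$, and the genuinely useful fact is the chain of equalities
\[\im\phi^{D+x}_\Gamma\cap H^0(\Gamma\otimes\mathcal{O}_D)\;=\;\im\phi^D_{\Gamma(-x)}\;=\;\im\phi^D_\Gamma,\]
all read inside $H^0(\Gamma\otimes\mathcal{O}_D)$, regarded as the first summand of $H^0(\Gamma\otimes\mathcal{O}_{D+x})$. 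Here the inclusions ``$\subseteq$'' from left to right are immediate, and equality holds because all three subspaces have dimension $d-r$: this follows from $h^0(\Gamma(-x)) = s$ and $h^0(\Gamma(-D-x)) = h^0(\Gamma(-D))-1$ (valid for general $x$) together with the hypotheses $D\notin V^{r+1}_d(\Gamma)$, $D\notin V^{r+1}_d(\Gamma(-x))$ and $D+x\notin V^{r+1}_{d+1}(\Gamma)$. In particular $\im\phi^D_\Gamma$ has codimension $r$ in $H^0(\Gamma\otimes\mathcal{O}_D)$; this is where $r$ and $r+1$ will enter.

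For part (b): since $x$ is not a base point of $\Gamma(-D)$, pick $\xi_0\in H^0(\Gamma(-D))\setminus H^0(\Gamma(-D-x))$, so that $H^0(\Gamma(-D)) = H^0(\Gamma(-D-x))\oplus\langle\xi_0\rangle$. As $\nu\otimes\xi$ is linear in $\xi$, Coppens' intersection may be taken over a spanning set, and therefore
\[T_D(V^r_d(\Gamma)) = \Bigl(\bigcap_{\xi\in H^0(\Gamma(-D-x))}\beta_\xi^{-1}(\im\phi^D_\Gamma)\Bigr)\cap\beta_{\xi_0}^{-1}(\im\phi^D_\Gamma).\]
By the displayed identity (and because $\beta_\xi$, with its target, is unchanged in passing from $\Gamma$ to $\Gamma(-x)$) the first factor equals $T_D(V^r_d(\Gamma(-x)))$, while $\beta_{\xi_0}^{-1}(\im\phi^D_\Gamma)$, being the preimage of a codimension‑$r$ subspace under the linear map $\beta_{\xi_0}$, has codimension at most $r$ in $H^0(\mathcal{O}_D(D))$. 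Hence $T_D(V^r_d(\Gamma))$ has codimension $\le r$ in $T_D(V^r_d(\Gamma(-x)))$, which is (b).

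For part (a) I would reduce to (b). If $(\nu,0)\in T_{D+x}(V^r_{d+1}(\Gamma))$ then for every $\xi\in H^0(\Gamma(-D-x))$ the element $(\nu,0)\otimes\xi$ lies in $\im\phi^{D+x}_\Gamma$ and has vanishing $\Gamma\otimes k(x)$‑component, because the $\mathcal{O}_x(x)$‑part of $(\nu,0)$ is zero while $\xi$ vanishes at $x$; by the displayed identity its $H^0(\Gamma\otimes\mathcal{O}_D)$‑component, which is $\nu\otimes\xi$, must then lie in $\im\phi^D_\Gamma = \im\phi^D_{\Gamma(-x)}$. Letting $\xi$ run over $H^0((\Gamma(-x))(-D)) = H^0(\Gamma(-D-x))$, Coppens' criterion gives $\nu\in T_D(V^r_d(\Gamma(-x)))$; thus $da_x^{-1}\bigl(T_{D+x}(V^r_{d+1}(\Gamma))\bigr)\subseteq T_D(V^r_d(\Gamma(-x)))$. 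Since $\im(da_x)$ has codimension $1$, this yields $\dim T_{D+x}(V^r_{d+1}(\Gamma))\le\dim T_D(V^r_d(\Gamma(-x)))+1$, and combining with (b), $\dim T_D(V^r_d(\Gamma))\ge\dim T_D(V^r_d(\Gamma(-x)))-r\ge\dim T_{D+x}(V^r_{d+1}(\Gamma))-(r+1)$.

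Part (c) I would prove by a purely local determinantal computation, independent of the tangent‑cone machinery. Writing $H^0(\Gamma(-x)) = \Ke\bigl(H^0(\Gamma)\to\Gamma\otimes k(x)\bigr)$ exhibits $\phi_{\Gamma(-x)}$ as $\phi_\Gamma$ with one row deleted, so that $V^r_d(\Gamma)\subseteq V^r_d(\Gamma(-x))$; near $D$, where both $\phi^D_\Gamma$ and $\phi^D_{\Gamma(-x)}$ have rank exactly $d-r$, choose local frames in which $\phi_{\Gamma(-x)}$ takes the block form $\left(\begin{smallmatrix}I_{d-r}&0\\0&A\end{smallmatrix}\right)$ with $A$ vanishing at $D$; then $\phi_\Gamma$ is this matrix with one additional column, which after a column reduction against the block $I_{d-r}$ contributes a single further block $b$ of height $r$ with $b(D)=0$. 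Hence in a neighbourhood of $D$ one has $V^r_d(\Gamma(-x)) = \{A=0\}$ and $V^r_d(\Gamma) = \{A=0,\ b=0\}$, so $V^r_d(\Gamma)$ is cut out in $V^r_d(\Gamma(-x))$ by the $r$ equations $b=0$, giving $\dim_D V^r_d(\Gamma)\ge\dim_D V^r_d(\Gamma(-x))-r$. The one substantive ingredient throughout is the chain $\im\phi^{D+x}_\Gamma\cap H^0(\Gamma\otimes\mathcal{O}_D) = \im\phi^D_{\Gamma(-x)} = \im\phi^D_\Gamma$ — exactly where the generality of $x$ and the three ``$\setminus V^{r+1}$'' hypotheses are used — and I expect the remaining work to be careful bookkeeping: checking that Coppens' tangent space, defined via first‑order deformations of $D$ as a subscheme of $C$ and hence living in $H^0(\mathcal{O}_D(D))$, is matched up compatibly across $\Gamma$, $\Gamma(-x)$ and the passage $d\rightsquigarrow d+1$, and in particular that the multiplication maps $\beta_\xi$ and the images $\im\phi^D_\Gamma,\ \im\phi^D_{\Gamma(-x)},\ \im\phi^{D+x}_\Gamma$ are identified correctly once $x\notin D$.
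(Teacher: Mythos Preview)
Your proof is correct. Part (b) is exactly the paper's argument: split off one section $\xi_0$ and bound the codimension of $\beta_{\xi_0}^{-1}(\im\phi^D_\Gamma)$ by $r$. For (a) and (c), however, you take genuinely different routes.

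For (a), the paper works directly: it shows the \emph{equality}
\[
T_DV^r_d(\Gamma)=\bigl[T_{D+x}V^r_{d+1}(\Gamma)\cap H^0(\mathcal{O}_D(D))\bigr]\cap\beta_{\gamma}^{-1}(\im\phi^D_\Gamma),
\]
then asserts the sum decomposition $H^0(\mathcal{O}_D(D))=[T_{D+x}V^r_{d+1}(\Gamma)\cap H^0(\mathcal{O}_D(D))]+\beta_{\gamma}^{-1}(\im\phi^D_\Gamma)$ to turn this into an exact dimension formula, and finishes with the same bound $\dim\beta_{\gamma}^{-1}(\im\phi^D_\Gamma)\ge d-r$. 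You instead prove only the containment $da_x^{-1}\bigl(T_{D+x}V^r_{d+1}(\Gamma)\bigr)\subseteq T_DV^r_d(\Gamma(-x))$ and then feed this into (b). Your argument is a little slicker, since it sidesteps the sum decomposition; the paper's gives slightly more, namely an exact expression for $\dim T_DV^r_d(\Gamma)$ in terms of the other data.

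For (c) the difference is more substantial. The paper argues globally: it shows $V^r_{d+1}(\Gamma)=\overline{\bigcup_{p\in U}\{p+V^r_d(\Gamma(-p))\}}$, deduces $\dim V^r_d(\Gamma(-x))=\dim V^r_{d+1}(\Gamma)-1$ for general $x$, and then invokes the Aprodu--Sernesi inequality $\dim V^r_d(\Gamma)\ge\dim V^r_{d+1}(\Gamma)-(r+1)$. Your local determinantal computation --- putting $\phi_{\Gamma(-x)}$ in normal form and observing that the extra column of $\phi_\Gamma$ contributes $r$ additional equations --- is entirely self-contained and avoids the external citation; it yields the local inequality $\dim_D V^r_d(\Gamma)\ge\dim_D V^r_d(\Gamma(-x))-r$ at the given point $D$. (One tiny slip: passing from $\Gamma$ to $\Gamma(-x)$ deletes a \emph{column}, not a row, of the evaluation matrix, as you in fact use two lines later.) The paper's version is a global dimension comparison independent of $D$; yours is pointwise at the $D$ in the hypotheses, which is consistent with how (a) and (b) are stated.
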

\begin{proof}
(a) We interpret $H^0(\mathcal{O}_D(D))$ as a subspace of $H^0(\mathcal{O}_{D+x}(D+x))$ and $H^0(\Gamma\otimes \mathcal{O}_D)$ as a subspace of $H^0(\Gamma\otimes \mathcal{O}_{D+x})$.
Using these interpretations we obtain a commutative diagram:
$$\begin{array}{cccccc}
&H^0(\mathcal{O}_D(D)) &\!\!\!\!\!\!\!\!\!\!\!\!\!\!\!\!\!\!\!\!\!\!\!\!\!\!\!\!\overset{\beta_\xi}\longrightarrow H^0(\Gamma\otimes \mathcal{O}_D)& \\
i_1\!\!\!\!\!\!\!\!\!\!\!\!\!\!\!\!\!\!\!\!\!\!\!\!\!\!\!\!\!\!\!\!\!\!\!\!\!\!\!\!\!&\downarrow &  \downarrow &\!\!\!\!\!\!\!\!\!\!\!\!\!\!\!\!\!\!\!\!\!\!\!\!\!\!\!\!\!\!\!\!\!\!\!\!\!\!\!i_2\\
&H^0(\mathcal{O}_{D+x}(D+x)) &\overset{\bar{\beta}_\xi}\longrightarrow H^0(\Gamma\otimes \mathcal{O}_{D+x}),&
\end{array}$$
in which $\beta_\xi$ coincides on the  restriction of $\bar{\beta}_\xi$ to $H^0(\mathcal{O}_D(D))$.
Set
$$H^0(\Gamma(-D))=H^0(\Gamma(-D-x))\oplus \langle \gamma \rangle,$$
and observe that if $\lbrace\gamma_1, \cdots, \gamma_t\rbrace$ is a basis for $H^0(\Gamma(-D-x))$, then
$$
\begin{array}{ccccc}
T_DV^r_d(\Gamma)&=&[\bigcap_{i=1}^{i=t}
 \beta_{\gamma_i}^{-1}(\im(\phi_{\Gamma}^D))]\cap \beta_{\gamma}^{-1}(\im(\phi_{\Gamma}^D))&&\\
 &=&[\bigcap_{i=1}^{i=t}
 \bar{\beta}_{\gamma_i}^{-1}(\im(\bar{\phi}_{\Gamma}^{D+x}))\cap H^0(\mathcal{O}_D(D))]\cap \beta_{\gamma}^{-1}(\im(\phi_{\Gamma}^D)).
\end{array}
$$
This implies that $T_DV^r_d(\Gamma)=[T_{D+x}V^r_{d+1}(\Gamma)]\cap H^0(\mathcal{O}_D(D)) \cap  \beta_{\gamma}^{-1}(\im(\phi_{\Gamma}^D))$. Observe furthermore that
$$H^0(\mathcal{O}_D(D))=[T_{D+x}V^r_{d+1}(\Gamma)\cap H^0(\mathcal{O}_D(D))]+ \beta_{\gamma}^{-1}(\im(\phi_{\Gamma}^D)),$$
by which we obtain
$$\dim T_DV^r_d(\Gamma)=\dim T_{D+x}V^r_{d+1}(\Gamma)-1+\dim \beta_{\gamma}^{-1}(\im(\phi_{\Gamma}^D)) -d.$$
The assertion would be a direct consequence of the inequality
 \begin{align}\label{inequality}
 \beta_{\gamma}^{-1}(\im(\phi_{\Gamma}^D))\geq d-r=\dim \im(\phi^D_{\Gamma}).
 \end{align}
In order to prove the inequality (\ref{inequality}), set $V=\beta_{\gamma}^{-1}(\im(\phi_{\Gamma}^D))$  and observe that
$$ \begin{array}{ccc}
\dim V=\dim [\ker \beta_{\gamma}\cap V]+\dim [\im \beta_{\gamma}\cap \im \phi^D_{\Gamma}]
=\dim \ker \beta_{\gamma}+\dim [\im \beta_{\gamma}\cap \im \phi^D_{\Gamma}] \\
=d-\dim \im \beta_{\gamma}+\dim [\im \beta_{\gamma}\cap \im \phi^D_{\Gamma}]=
d-(\dim \im \beta_{\gamma}-\dim [\im \beta_{\gamma}\cap \im \phi^D_{\Gamma}]).
\end{array}$$
The assertion is now immediate by
$$\dim \im \beta_{\gamma}-\dim [\im \beta_{\gamma}\cap \im \phi^D_{\Gamma}]=\dim (\frac{\im \beta_{\gamma}+ \im \phi^D_{\Gamma}}{ \im \phi^D_{\Gamma}})\leq \dim \frac{H^0(\Gamma\otimes \mathcal{O}_D)}{ \im \phi^D_{\Gamma}}=r.$$
(b) For $\xi \in H^0(\Gamma-x-D)$ we are in the situation of the following diagram:
%\vspace{-.4cm}
\unitlength .500mm
\linethickness{0.5pt}
\ifx\plotpoint\undefined\newsavebox{\plotpoint}\fi
\begin{center}
\begin{picture}(148.5,95)(0,35)
\put(5,77){\makebox(0,0)[cc]{$H^0(\mathcal{O}_D(D))$}}
\put(45,115){\makebox(0,0)[cc]{$H^0(\Gamma(-x)\otimes \mathcal{O}_D)$}}
\put(51,46){\makebox(0,0)[cc]{$H^0(\Gamma\otimes \mathcal{O}_D)$}}
\put(140,115){\makebox(0,0)[cc]{$H^0(\Gamma(-x))$}}
\put(130,45){\makebox(0,0)[cc]{$H^0(\Gamma)$}}
%\vector(27.25,84.75)(54.25,111.75)
\put(51.5,109){\vector(1,1){.07}}
\multiput(27.25,84.75)(.0337078652,.0337078652){700}{\line(0,1){.0337078652}}
%%%%%%%%%%%%%%%%%%%%%%%%%%%%%%%%%%%
\put(51.5,49.3){\vector(1,-1){.07}}
\multiput(26.5,74.5)(.0337273992,-.0337273992){700}{\line(0,-1){.0337273992}}
%%%%%%%%%%%%%%%%%%%%%%%%%%%%%%%%%%%
\put(73,115.5){\vector(-1,0){.07}}
\put(75,115.5){\line(1,0){45}}
%%%%%%%%%%%%%%%%%%%%%%%%%%%%%%%%%%%
\put(72,45.25){\vector(-1,0){.07}}
\put(75,45.25){\line(1,0){40}}
%%%%%%%%%%%%%%%%%%%%%%%%%%%%%%%%%%%
\put(129.5,51.25){\vector(0,-1){.07}}
\multiput(129.75,105.5)(-.03125,-7.40625){7}{\line(0,-1){7.40625}}
%%%%%%%%%%%%%%%%%%%%%%%%%%%%%%%%%%%
\put(55.5,51.25){\vector(0,-1){.07}}
\multiput(55.5,105.5)(-.03125,-7.40625){7}{\line(0,-1){7.40625}}
%%%%%%%%%%%%%%%%%%%%%%%%%%%%%%%%%%%
\put(35,100){\makebox(0,0)[cc]{$\acute{\beta}_\xi$}}
\put(90,121){\makebox(0,0)[cc]{$\phi^D_\Gamma(-x)$}}
\put(35,57.75){\makebox(0,0)[cc]{$\beta_\xi$}}
\put(90,36){\makebox(0,0)[cc]{$\phi_\Gamma^D$}}
\put(138,80){\makebox(0,0)[cc]{$i_2$}}
\put(50,80){\makebox(0,0)[cc]{$i_1$}}
\end{picture}
\end{center}
 where $i_1$ and $i_2$ are inclusions. It is easy to see that $\im \phi_\Gamma^D=\im \phi^D_\Gamma(-x)$, by which we obtain $\beta^{-1}_\xi(\im \phi_\Gamma^D)= \acute{\beta}^{-1}_\xi(\im\phi^D_{\Gamma(-x)})$. This implies that, with $\gamma$ as in the proof of the previous case, we have
$$T_DV^r_d(\Gamma)=T_DV^r_d(\Gamma(-x))\cap \beta_\gamma ^{-1}(\im \phi_\Gamma^D).$$
The rest of the proof goes verbatim as in part (a).

(c) We might assume that $V^r_d(\Gamma(-x))$ and $V^r_{d+1}(\Gamma)$ are irreducible. A general $x\in C$ can not stand in the support of all divisors $D\in V^r_{d+1}(\Gamma)$. Otherwise; if for any $E+x\in V^r_{d+1}(\Gamma)$ the divisor $E$ belongs to $V^r_{d}(\Gamma)$, then $\dim V^r_{d+1}(\Gamma)=\dim V^r_{d}(\Gamma)$ which is impossible. If the divisor $E$ belongs to $V^r_{d}(\Gamma(-x))\setminus V^r_{d}(\Gamma)$ for some $E+x\in V^r_{d+1}(\Gamma)$, then one has $h^0(E)=0$ by \cite[Lemma 3.3]{A. B1} which once again is impossible.

For an open subset $U\subset C$, from the equality
$V^r_{d+1}(\Gamma)=\overline{\cup_{p\in U}\{p+V^r_{d}(\Gamma(-p))\}}$
we obtain $\dim V^r_{d}(\Gamma(-x))=\dim V^r_{d+1}(\Gamma)-1$, for general $x\in C$. Indeed for such $x$ the equality
$\dim V^r_{d}(\Gamma(-x))=\dim V^r_{d+1}(\Gamma)$ implies that any $D\in V^r_{d+1}(\Gamma)$
contain $x$ in its support, which is absurd by what we just proved.
This by \cite[Thm. 4.1]{A-S2}, implies the assertion.
\end{proof}
\begin{cor}\label{corollary 2}
If $V^{r}_{d}(\Gamma)$ is smooth at $D\in V^{r}_{d}(\Gamma)$ and of expected dimension,
then for general $x\in C$, $V^r_d(\Gamma(-x))$ would be of expected dimension and smooth at $D\in V^r_d(\Gamma(-x))$. The same conclusion is valid for $V^{r}_{d+1}(\Gamma)$, i.e. it would be of expected dimension and smooth at $D+x \in V^{r}_{d+1}(\Gamma)$.
\end{cor}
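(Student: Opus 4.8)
\textbf{Proof proposal for Corollary \ref{corollary 2}.}
The plan is to wedge the local dimension of each of the three loci between its expected dimension (from below) and its Zariski tangent space dimension (from above), and to observe that the comparison estimates of Theorem \ref{comparision theorem 1} close this sandwich exactly: the correction terms $r$ and $r+1$ appearing there are precisely the differences between the relevant expected dimensions. To make this precise, write $h^{0}(\Gamma)=s+1$; for general $x\in C$ the point $x$ is not a base point of $\Gamma$, so $h^{0}(\Gamma(-x))=s$. Since $V^{r}_{d}(\Gamma)=Z(\wedge^{d-r+1}\phi_{\Gamma})$ with $\phi_{\Gamma}$ a morphism of bundles of ranks $s+1$ and $d$ on $C_{d}$, its expected codimension is $r(s+1-d+r)$, so
$$\exp\dim V^{r}_{d}(\Gamma)=d-r(s+1-d+r),$$
and replacing $s$ by $s-1$, resp. $d$ by $d+1$, gives
$$\exp\dim V^{r}_{d}(\Gamma(-x))=\exp\dim V^{r}_{d}(\Gamma)+r,\qquad
\exp\dim V^{r}_{d+1}(\Gamma)=\exp\dim V^{r}_{d}(\Gamma)+r+1 .$$

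Next I would invoke two standard facts, for an arbitrary point $p$ of a scheme $X$: one has $\dim_{p}X\le\dim T_{p}X$, with equality exactly when $X$ is regular at $p$; and, because each of $V^{r}_{d}(\Gamma)$, $V^{r}_{d}(\Gamma(-x))$ and $V^{r}_{d+1}(\Gamma)$ is the degeneracy locus of a bundle map and is thus cut out locally by the expected number of equations, each of its local components has codimension at most the expected one, i.e. $\dim_{p}X\ge\exp\dim X$. The hypothesis that $V^{r}_{d}(\Gamma)$ is smooth at $D$ and of expected dimension is exactly $\dim T_{D}V^{r}_{d}(\Gamma)=\exp\dim V^{r}_{d}(\Gamma)$. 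A short computation using $h^{0}(\Gamma(-D-x))\ge h^{0}(\Gamma(-D))-1$ shows $D\in V^{r}_{d}(\Gamma(-x))$ and $D+x\in V^{r}_{d+1}(\Gamma)$, so the tangent spaces $T_{D}V^{r}_{d}(\Gamma(-x))$ and $T_{D+x}V^{r}_{d+1}(\Gamma)$ make sense; moreover, for general $x$ the open genericity conditions ``$\setminus V^{r+1}$'' of Theorem \ref{comparision theorem 1} may be assumed (in particular $D\notin V^{r+1}_{d}(\Gamma)$, which is part of what the smoothness hypothesis should be read to include).

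Now the sandwich. Combining the two facts above with Theorem \ref{comparision theorem 1}(b),
$$\exp\dim V^{r}_{d}(\Gamma(-x))\ \le\ \dim_{D}V^{r}_{d}(\Gamma(-x))\ \le\ \dim T_{D}V^{r}_{d}(\Gamma(-x))$$
$$\le\ \dim T_{D}V^{r}_{d}(\Gamma)+r\ =\ \exp\dim V^{r}_{d}(\Gamma)+r\ =\ \exp\dim V^{r}_{d}(\Gamma(-x)),$$
so all the inequalities are equalities; ``$\dim_{D}=\exp\dim$'' says $V^{r}_{d}(\Gamma(-x))$ is of expected dimension at $D$, and ``$\dim_{D}=\dim T_{D}$'' says it is smooth at $D$. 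Running the identical argument with Theorem \ref{comparision theorem 1}(a) in place of (b) and the shift $r+1$ in place of $r$ yields that $V^{r}_{d+1}(\Gamma)$ is of expected dimension and smooth at $D+x$, and part (c) serves as a consistency check on the dimension count. I expect the only real obstacle to be making sure the hypotheses of Theorem \ref{comparision theorem 1} genuinely hold for a general $x$, together with the expected-dimension bookkeeping that the shifts are exactly $r$ and $r+1$; once these are in hand, the corollary is formal.
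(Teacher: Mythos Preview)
Your argument is correct and follows the same sandwich strategy as the paper: trap each locus between its expected dimension (from below, by degeneracy-locus theory) and the dimension of its Zariski tangent space (from above), and then close the gap using the comparison estimates of Theorem \ref{comparision theorem 1}. The only difference lies in which ingredients are invoked. You bound the tangent spaces directly via parts (a) and (b), so that the dimension and smoothness assertions drop out simultaneously, and you regard part (c) as a mere consistency check. The paper instead separates the two conclusions: it first establishes the expected-dimension statements on their own---using part (c) for $V^{r}_{d}(\Gamma(-x))$ and the external result \cite[Thm.~4.1]{A-S2} for $V^{r}_{d+1}(\Gamma)$---and only afterwards appeals to parts (b) and (a) for smoothness. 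Your route is a bit more self-contained, since it avoids the outside citation; the paper's route has the minor advantage that the dimension claim is obtained without passing through the tangent-space computation.
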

\begin{proof}
Based on the inequality $\dim V^r_d(\Gamma(-x))\geq d-r(s-d+r)$, the assertion on the dimension of $V^r_d(\Gamma(-x))$ is a consequence of Theorem \ref{comparision theorem 1}(c), by which part (b) of the same theorem verifies the smoothness assertion for $V^r_d(\Gamma(-x))$ at $\in V^r_d(\Gamma(-x))$. The same argument goes verbatim for smoothness of $V^r_{d+1}(\Gamma)$
at $D+x$. Meanwhile, the assertion on its dimension is concluded by \cite[Thm. 4.1]{A-S2} 
\end{proof}
\begin{cor}\label{corollary 1}
 Assume that $\Gamma(-x)$ turns to be very ample for general $x\in C$. If non-empty, then $\dim V^1_{s-1}(\Gamma)$ is $(s-4)$-dimensional.
\end{cor}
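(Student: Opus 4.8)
The plan is to derive Corollary \ref{corollary 1} as a dimension count from Corollary \ref{corollary 2} together with the geometry of secant loci of a very ample line bundle. First I would unwind the numerology: with $h^0(\Gamma)=s+1$, the relevant secant scheme is $V^1_{s-1}(\Gamma)$, and its expected dimension is $d-r(s-d+r)$ with $d=s-1$ and $r=1$, which gives $(s-1)-1\cdot(s-(s-1)+1)=(s-1)-2=s-3$. I will need to reconcile this with the claimed value $s-4$, so the first real step is to pin down precisely which count is being invoked: the statement is about $\dim V^1_{s-1}(\Gamma)$ for $\Gamma$ itself, while the hypothesis is phrased in terms of $\Gamma(-x)$ being very ample for general $x$; the natural move is to use part (c) of Theorem \ref{comparision theorem 1}, which reads $\dim V^r_d(\Gamma)\geq \dim V^r_d(\Gamma(-x))-r$, or rather its partner $\dim V^r_{d}(\Gamma(-x))=\dim V^r_{d+1}(\Gamma)-1$ from the proof, to pass between $\Gamma$ and $\Gamma(-x)$ and shift $d$.

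Next I would invoke the classical fact that for a very ample line bundle $A$ on $C$ (here $A=\Gamma(-x)$), the $1$-secant locus $V^1_{s'-1}(A)$, where $h^0(A)=s'+1=s$, is precisely the variety of $(s-2)$-secant lines to the embedded curve, equivalently the secant variety description: a divisor $D$ of degree $s'-1=s-2$ imposes only $s'-2 = s-3$ conditions on $A$, i.e. $h^0(A)-h^0(A(-D))\le s'-2$. Very ampleness forces such a $D$ to have a very constrained structure — in fact the expected dimension $d-r(s'-d+r)$ with $d=s-2$, $r=1$, $h^0(A)=s$ gives $(s-2)-1\cdot((s-1)-(s-2)+1)=(s-2)-2=s-4$. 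So the heart of the argument is: $\dim V^1_{s-2}(\Gamma(-x)) = s-4$ when $\Gamma(-x)$ is very ample. Then Corollary \ref{corollary 2} (or directly the dimension inequalities) propagates this back: $V^1_{s-1}(\Gamma)$ is obtained, up to the closure of a fibration over an open $U\subset C$, as $\overline{\cup_{p\in U}\{p+V^1_{s-2}(\Gamma(-p))\}}$, exactly the formula used in the proof of Theorem \ref{comparision theorem 1}(c), so $\dim V^1_{s-1}(\Gamma)=\dim V^1_{s-2}(\Gamma(-x))+1=(s-4)+1=s-3$ — and here I must be careful: if the intended statement is genuinely $s-4$, then the corollary is presumably asserting the dimension of the \emph{general fiber} or of $V^1_{s-2}(\Gamma(-x))$ itself, so I would state the result in whichever of these two forms is consistent and flag the off-by-one.

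The key steps in order: (1) compute the expected dimension $d-r(s-d+r)$ in the two relevant cases $(d,r)=(s-1,1)$ over $\Gamma$ and $(d,r)=(s-2,1)$ over $\Gamma(-x)$; (2) show that very ampleness of $\Gamma(-x)$ makes $V^1_{s-2}(\Gamma(-x))$ nonempty-implies-smooth-of-expected-dimension, which is where one cites the standard fact that for a very ample bundle the relevant secant/Brill--Noether degeneracy locus is reduced of expected dimension (this is exactly the kind of situation Corollary \ref{corollary 2} is built to handle, run in the direction $\Gamma(-x)\rightsquigarrow\Gamma$, or run backwards using that the hypothesis of expected dimension is automatic for very ample bundles by a local-computation of the Brill--Noether matrix as in Remark \ref{remark1}); (3) combine with the fibration identity $V^1_{s-1}(\Gamma)=\overline{\cup_{p\in U}\{p+V^1_{s-2}(\Gamma(-p))\}}$ and Theorem \ref{comparision theorem 1}(c) to transfer the dimension.

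The main obstacle I expect is step (2): justifying that ``$\Gamma(-x)$ very ample'' really does force the secant locus $V^1_{s-2}(\Gamma(-x))$ to have expected dimension $s-4$ rather than merely $\ge s-4$. The inequality direction is free from Theorem \ref{comparision theorem 1}(c) and \cite[Thm. 4.1]{A-S2}; the upper bound is the delicate part, and the cleanest route is probably to note that for a very ample $A$ with $h^0(A)=s$, a divisor $D$ with $h^0(A)-h^0(A(-D))\le s-3$ forces $h^0(A(-D))\ge 2$ with $\deg(A(-D))=\deg A-(s-2)$, so $A(-D)$ is a $g^1$ of small degree; the dimension of the family of such $D$ is then controlled by the gonality/Brill--Noether bound, which under very ampleness of $A$ (hence a lower bound on the degree of $A$ relative to $s$ and $g$) collapses to the expected value. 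I would therefore present step (2) as the one genuine input, citing the classical secant-variety dimension estimate (Catalano-Johnson, or \cite{ACGH} Ch. VIII for the $W^r_d$ analogue transported via Remark \ref{remark1}), and treat steps (1) and (3) as bookkeeping.
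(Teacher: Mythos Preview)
Your overall strategy matches the paper's one-line proof (Theorem~\ref{comparision theorem 1}(c) combined with \cite[Lemma 4.4]{A. B1}), but there is a systematic arithmetic slip that generates the entire ``off-by-one'' worry you flag. The expected dimension of $V^r_d(L)$ is $d-r\bigl(h^0(L)-d+r\bigr)$, and here $h^0(\Gamma)=s+1$, not $s$. Thus for $V^1_{s-1}(\Gamma)$ one gets
\[
(s-1)-1\cdot\bigl((s+1)-(s-1)+1\bigr)=(s-1)-3=s-4,
\]
exactly as claimed in the corollary; there is nothing to reconcile. The same slip recurs when you treat $\Gamma(-x)$: with $h^0(\Gamma(-x))=s$, the expected dimension of $V^1_{s-2}(\Gamma(-x))$ is $(s-2)-1\cdot(s-(s-2)+1)=s-5$, not $s-4$, and then the fibration identity from the proof of Theorem~\ref{comparision theorem 1}(c) gives $\dim V^1_{s-1}(\Gamma)=(s-5)+1=s-4$, again on the nose.

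Once the numerology is fixed, your steps (1) and (3) are fine and coincide with the paper. For step (2), the paper simply invokes \cite[Lemma 4.4]{A. B1}, which supplies precisely the dimension statement for the secant locus of a very ample bundle that you are trying to reconstruct by hand; your sketch of how to prove it (reduce to a pencil count via $h^0(A(-D))\ge 2$) is plausible but unnecessary here since the result is already available in the cited reference.
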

\begin{proof}
Theorem \ref{comparision theorem 1}(c) together with \cite[Lemma 4.4]{A. B1}
implies the corollary.
\end{proof}
\begin{remark}
(a) Theorem \ref{comparision theorem 1} implies Aprodu-Sernesi's result for reduced
$V^r_d(\Gamma)$'s.

\noindent(b) Corollary \ref{corollary 1} is invalid without the very ampleness assumption on $\Gamma(-x)$, see \cite[Ch. VIII. Exe. F]{ACGH}.

\noindent (c) The equality $\gon(C)=[\frac{g+1}{2}]$ is hold for general curves by which one can prove that for general $x_1,\cdots, x_k$ ($1\leq k\leq [\frac{g-1}{2}]$) the line bundle
$K(-x_1-\cdots -x_k)$ turns to be very ample on general curves. Using this fact together with Theorem \ref{comparision theorem 1}(c) one can reprove $\dim C^1_d=2d-g+1$.

\noindent (d) The special case $r=1$ from Theorem \ref{comparision theorem 1}(c) has been proved and was used to prove the main theorem, Theorem 1.3, in \cite{A. B 2}.
\end{remark}

\end{document}